\theoremstyle{plain}
\newtheorem{theorem}{Theorem}[section]
\newtheorem{lemma}[theorem]{Lemma}
\newtheorem{corollary}[theorem]{Corollary}
\newtheorem{proposition}[theorem]{Proposition}
\numberwithin{equation}{section}
\theoremstyle{definition}
\newtheorem{definition}[theorem]{Definition}
\newtheorem{example}[theorem]{Example}
\newtheorem{remark}[theorem]{Remark}
\theoremstyle{remark}
\newcommand{\C}{\mathbb{C}}
\newcommand{\CP}{\mathbb{C}P}
\newcommand{\Q}{\mathbb{Q}}
\newcommand{\R}{\mathbb{R}}
\newcommand{\Z}{\mathbb{Z}}
\newcommand{\Sigmahat}{\widehat{\Sigma}}
\newcommand{\sigmahat}{\hat{\sigma}}
\newcommand{\im}{{\rm Im}\hspace{1pt}}
\def\blfootnote{\xdef\@thefnmark{}\@footnotetext}\makeatother
\numberwithin{equation}{section}
\newcommand{\PP}{\mbox{$\mathit P\hspace{-1pt}P$}}
\def\quasitoric{toric }
\def \({\left(}
\def \){\right)}
\def \<{\langle}
\def \>{\rangle}
\def \bar{\overline}
\def \deg{\mathrm{deg}}
\def \bb{\mathbb}
\def \mc{\mathcal}
\def \CP{{\bb{CP}}}
\def \PP{{\mathcal{PP}}} 
\def \SR{{\mathcal{SR}}} 
\def \ZZ{{\bb{Z}}}
\def \begineq{\begin{equation}}
\def \endeq{\end{equation}}
\numberwithin{equation}{section}
\begin{document}

\title[]{On the Integral Cohomology Ring of Toric Orbifolds and Singular Toric Varieties}

\author[A. Bahri]{Anthony Bahri}
\address{Department of Mathematics, Rider University}
\email{bahri@rider.edu}
%

\author[S. Sarkar]{Soumen Sarkar}
\address{Department of Mathematics, Indian Institute of Technology Madras}
\email{soumensarkar20@gmail.com}

\author[J. Song]{Jongbaek Song}
\address{Department of Mathematical Sciences, KAIST}
\email{jongbaek.song@gmail.com}
%

\subjclass[2010]{Primary 14M25, 55N91, 57R18; Secondary 13F55, 52B11 }

\keywords{toric orbifold, quasitoric orbifold, toric variety, Lens space, 
equivariant cohomology, Stanley--Reisner ring, piecewise polynomial}

\dedicatory{}

\begin{abstract}
We examine the integral cohomology rings of certain families of
$2n$-dimensional orbifolds $X$ that are equipped with a well-behaved
action of the $n$-dimensional real torus. These orbifolds arise from two
distinct but closely related combinatorial sources, namely from 
characteristic pairs $(Q,\lambda)$,  
where $Q$ is a simple convex
$n$-polytope and $\lambda$ a labelling of its facets, and from
$n$-dimensional fans $\Sigma$. In the literature, they are referred 
as \quasitoric orbifolds and singular toric varieties respectively. 
Our first main result provides combinatorial conditions on 
$(Q,\lambda)$ or on $\Sigma$ which
ensure that the integral cohomology groups $H^{\ast}(X)$ of the associated 
orbifolds are concentrated in even degrees. Our second main result 
assumes these condition to be true, and expresses the graded ring 
$H^*(X)$ as a quotient of an algebra of polynomials that satisfy an 
integrality condition arising from the underlying combinatorial data. 
Also, we compute  several examples. 

\end{abstract}
\maketitle

\tableofcontents

\section{Introduction}
There are several advantages to studying topological spaces whose
integral cohomology groups $H^*(X)$ are torsion-free and concentrated
in even degrees; for example, their complex $K$-theory and complex
cobordism groups may be deduced immediately, because the appropriate
Atiyah-Hirzebruch spectral sequences collapse for dimensional
reasons. For convenience, we call such spaces \emph{even}, where
integral coefficients are understood unless otherwise stated. Our
fundamental aim is to identify certain families of even spaces within
the realms of toric topology, and to explain how their evenness leads
to a description of the Borel equivariant cohomology rings $H_T^*(X)$,
and thence to the multiplicative structure of $H^*(X)$.

Many even spaces arise from complex geometry, and have been of major
importance since the early 20th century. They range from complex
projective spaces and Grassmannian manifolds, to Thom spaces of
complex vector bundles over other even spaces. Examples of the latter
include stunted projective spaces, which play an influential and
enduring role in homotopy theory, and certain restricted families of
weighted projective spaces. In fact \emph{every} weighted projective
space is even, thanks to a beautiful and somewhat surprising result of
Kawasaki \cite{Ka}, whose calculations lie behind one of our main 
works in Section \ref{sec_van_odd_homology}. In the literature, 
weighted projective spaces have been viewed 
as singular toric varieties or as  \quasitoric orbifolds which we shall define 
in Section \ref{sec_toric_orb_and_orb_lens_sp}, and our results may 
be interpreted as an investigation of their generalizations within either 
context.

We begin in Section \ref{sec_ret_of_simple_poly} by introducing a 
sequence $\{B_k\}$ of polytopal complexes whose initial term is a 
simple polytope $Q$ and the final term is a vertex of $Q$. We define 
the sequence inductively by the rule stated as 
\eqref{def_weak_free_vertex} in Section \ref{sec_ret_of_simple_poly}, which is motivated by several 
spaces called \emph{invariant subspaces}, and \emph{orbifold 
lens spaces} sitting inside the given toric orbifold. 

In Section \ref{sec_toric_orb_and_orb_lens_sp},  we summarize the 
theory of \emph{toric orbifolds}
$X=X(Q,\lambda)$\footnote{In the literature, these orbifolds are 
sometimes called \emph{quasitoric} orbifolds}, as constructed from an 
$n$-dimensional simple convex polytope $Q$ and an 
\emph{$\mathcal{R}$-characteristic function} $\lambda$ from its facets to $\Z^n$. 
The combinatorial data $(Q, \lambda)$ is called 
an \emph{$\mathcal{R}$-characteristic pair} associated to the given toric orbifold. 
The notion of \emph{invariant subspaces} and 
\emph{orbifold lens spaces} follow from $(Q,\lambda)$, which we 
shall explain in the following subsections. 
Moreover, for each polytopal complex $B$ which appears in a 
retraction sequence, the $\mathcal{R}$-characteristic function $\lambda$ 
may be used to associate a finite group $G_{B}(v)$, see \eqref{eq_def_G_B(v)}, 
to certain vertices $v$ called free vertices in $B$, 
and to define the collection 
\begin{equation}\label{eq_gcd_collection}
\big\{ |G_{B}(v)| ~\big|  ~v ~\text{is a \emph{free vertex} in}~B \big\}.
\end{equation}

Interest in toric orbifolds was stimulated by Davis and Januszkiewicz
\cite{DJ}, who saw them as natural extensions to their own smooth toric
manifolds\footnote{They are renamed in \cite{BP} as 
\emph{quasitoric} manifolds.}. They proved that
\quasitoric manifolds are always even; 
however, the best comparable
statement for \quasitoric \emph{orbifolds} is due to Poddar and the second author 
\cite{PS} who showed that, in general, they are only even over the
rationals. 
%
We introduce our main result of the first part of this paper in 
Section \ref{sec_van_odd_homology} as follows. 
\begin{theorem}\label{thm_gcd_theorem}
Given any \quasitoric orbifold $X(Q,\lambda)$, assume that the \emph{gcd} of the
collection \eqref{eq_gcd_collection} is 1 for each $B$ which appears in 
a retraction sequence with $\dim B > 1$; then $X$ is even.
\end{theorem}
The proof employs a cofiber sequence 
involving \emph{orbifold lens spaces}, which
are generalization of \emph{lens complexes}, introduced by 
Kawasaki \cite{Ka}.  Furthermore, Theorem \ref{thm_gcd_theorem} automatically applies 
to weighted projective spaces.

In Section \ref{sec_cohom_ring_and_ppoly}, we restrict our emphasis to
projective toric orbifolds, which are realized as toric varieties whose details 
are admirably presented by Cox, Little and Schenck in their encyclopedic 
book \cite{CLS}. Every such variety
$X_\Sigma$ is encoded by a fan $\Sigma$ in $\R^n$, and admits a
canonical action by the $n$-dimensional real torus $T^n$. If $\Sigma$ is
\emph{smooth}, then the underlying geometry guarantees that $X_\Sigma$ is
always even. Moreover, it is true that the Borel equivariant cohomology
ring $H^*_T(X_\Sigma)$ is isomorphic to the Stanley--Reisner ring
$\mathcal{SR}[\Sigma]$, which is also concentrated in even degrees, and
$H^*(X_\Sigma)$ is its quotient by a linear ideal determined by \eqref{eq_linear_relations}. 
It is important to note that $\mathcal{SR}[\Sigma]$ is isomorphic to the ring
$\PP[\Sigma]$ of \emph{integral piecewise polynomials} on $\Sigma$,
for any \emph{smooth} fan.

For a particular class of singular examples, a comparable description
of the ring $H^*(X_\Sigma)$ was given in \cite{BFR}, as follows. If
$\varSigma$ is polytopal and $X_\Sigma$ is even, then $H^*(X_\Sigma)$
is the quotient of $\PP[\Sigma]$ by the ideal generated by all
\emph{global} polynomials. It is no longer possible to use the
Stanley--Reisner ring, which only agrees with $\PP[\Sigma]$ over the
rationals. In these circumstances, when $X_\Sigma$ is a toric variety over a 
polytopal fan, 
we have a major incentive to develop criteria which test whether or 
not it is even. There also remains the significant problem of 
presenting $\PP[\Sigma]$ by generators and relations, as exemplified 
by the calculation for the weighted projective space $\CP^3_{(1,2,3,4)}$ 
in \cite[\S4]{BFR}. So the aim of Section \ref{sec_cohom_ring_and_ppoly} 
is to find an alternative description for the ring of 
piecewise polynomials. 
It is accomplished by defining the \emph{weighted 
Stanley--Reisner ring} $w\mathcal{SR}[\Sigma]$, which turned out to be 
a subring of $\mathcal{SR}[\Sigma]$, consisting of polynomials
that satisfy an \emph{integrality condition}, see Definition \ref{def_int_cond}.  
The main result of the Section \ref{sec_cohom_ring_and_ppoly} 
combines Theorem \ref{thm_gcd_theorem} 
and Theorem \ref{thm_cohomology_ring}, as follows.
\begin{theorem}\label{thm_cohom_ring_without_Hodd=0}
Given any polytopal fan $\Sigma$ in $\R^n$, assume that 
the corresponding $\mc{R}$-characteristic pair 
$(Q, \lambda)$ satisfies the hypothesis of Theorem
\ref{thm_gcd_theorem};
then $X_\Sigma$ is even, and there exists an isomorphism
$$H^*(X_\varSigma)\;\cong\;w\mathcal{SR}[\Sigma] \big/ \mathcal{J}$$
of graded rings, where $\mathcal{J}$ is an ideal of linear 
relations determined by the generators of rays of~$\Sigma$.
\end{theorem} 
So our combinatorial condition on the fan allows us to give an
explicit description of the integral cohomology ring of $X_\Sigma$.

Several natural questions present themselves for future discussion.
For example, Sections \ref{sec_toric_orb_and_orb_lens_sp} and
\ref{sec_cohom_ring_and_ppoly} may be linked more closely by
establishing a common framework for \quasitoric 
orbifolds and toric varieties over non-smooth polytopal fans. 
The theory of multifans is an obvious candidate, but we
have been unable to identify an associated ring of piecewise
polynomials with sufficient clarity. However, the third author with Darby and Kuroki \cite{DKS}
has recently proposed a definition of piecewise polynomials on an 
orbifold torus graph, which does allow those two objects to 
be dealt with simultaneously.

In view of our opening remarks, another reasonable challenge is to
extend our study to the complex $K$-theory and complex cobordism of
\quasitoric orbifolds. This program was suggested by work
of Harada, Henriques and Holm \cite{HHH}, and begun in \cite{HHRW} by
the adoption of a categorical approach to piecewise structures; but
overall progress has been limited to a small subfamily of weighted
projective spaces, and much further work is required. 
However, some progress have done by the second author and Uma \cite{SU}.  

\textbf{Acknowledgments.}
We extend our sincere gratitude to Nigel Ray. Most properly, his name belongs
among those of the authors. The significance of his mathematical contribution 
to the results is most certainly not reflected accurately by the omission of his 
name. Among his many other contributions to the paper is the concise and 
elegant introduction. We thank Mikiya Masuda and Haozhi Zeng for pointing out 
a gap in the proof of Theorem 2.12 of the previous version, and Li Cai for helpful
comments about retraction sequences. 
We are also grateful for the hospitality of University of Calgary in July, 2015.

This work has been supported in part by Simons Foundation Grants 210368 and 426160. 
The first author acknowledges the award of research leave from Rider University and is grateful also for
the hospitality of the Princeton University Mathematics Department during the spring of 2015.
The second author would like to thank Pacific Institute for 
the Mathematical Sciences and University of Regina for financial support.
Lastly, the third author was supported by 
Basic Science Research Program through the National Research Foundation of Korea
(NRF) funded by the Korea government (MSIP) (No. NRF-2016R1A2B4010823).
He also would like to thank his advisor 
Prof. Dong Youp Suh for his encouragements and support throughout the project.

\section{A retraction of simple polytopes}
\label{sec_ret_of_simple_poly}

In this section, we introduce a natural way of retracting a 
simple polytope $Q$ to a point, which we call a 
\emph{retraction sequence}. For each polytope, there are finitely 
many such retractions, enabling us to develop a sufficient 
condition for torsion freeness in the homology of toric 
orbifolds in the following section. The operation itself is 
motivated by several spaces which arise in a toric orbifold
by decomposing the orbit space. We shall explain this
topological interpretation in Section \ref{sec_toric_orb_and_orb_lens_sp}. 
This section is devoted to give the combinatorial definition and properties of 
retraction sequences. 
We begin by introducing the definition of a polytopal complex. 

\begin{definition}\cite[Definition 5.1]{Zie}
A \emph{polytopal complex} $\mathcal{C}$ is a finite collection of 
polytopes in $\R^n$ satisfying:
\begin{enumerate}
\item if $E$ is a face of $F$ and $F \in \mathcal{C}$ then 
$E \in \mathcal{C}$,
\item if $E, F \in \mathcal{C}$ then $E \cap F$ is a face of 
both $E$ and $F$.
\end{enumerate}
Let $|\mathcal{C}| = \bigcup_{F \in \mathcal{C}} F$ be the
 underlying set of $\mathcal{C}$. 
\end{definition}
The elements of $\mathcal{C}$ are called faces and the zero 
dimensional faces of $\mathcal{C}$ are called vertices. 
We denote the set of vertices of $\mathcal{C}$ by 
$V(|\mathcal{C}|)$.
The dimension of $\mathcal{C}$ or $|\mathcal{C}|$ is the 
maximum of the dimension of its faces. Given a simple 
polytope $Q$, let $\mathcal{C}(Q)$ be the collection 
of all faces of $Q$ and $\mathscr{F}(Q)$ the collection of all 
facets of $Q$. Then, $\mathcal{C}(Q)$ is a polytopal 
complex and $|\mathcal{C}(Q)|$ is homeomorphic to 
$Q$ as manifolds with corners. 
Through out this paper, we always assume that 
$\ell:=|V(Q)|$, the number of vertices of $Q$,  
$m:=|\mathscr{F}(Q)|$, the number of facets of $Q$ and $n:=\dim Q$. 

Now, given an $n$-dimensional simple polytope $Q$, we 
construct a sequence of triples 
$\{(B_k, E_k, b_k)\}_{k=1}^\ell$, which we call a 
{\it retraction sequence} of $Q$. First, we define 
$B_1 = Q=E_1$ and $b_1 \in V(B_1)$. The second term 
$(B_2, E_2, b_2)$ is defined as follows. Consider a 
subcollection 
$$\mathcal{C}_2= \{E \in\mathcal{C}(Q) \mid  b_1\notin V(E)\}$$ 
of $\mathcal{C}(Q)$.  
Then, $\mathcal{C}_2$ is an $(n-1)$-dimensional polytopal 
complex. We define $B_2$ by the underlying set 
$|\mathcal{C}_2|$ of $\mathcal{C}_2$. We choose vertex $b_2$ 
of $B_2$ such that $b_2$ has a neighborhood 
diffeomorphic to $\R^N_{\geq 0}$ as manifold with corners, for 
some $1\leq N \leq \dim B_2$  and let  $E_2$ be 
the  unique $N$-dimensional  face of $B_2$ containing $b_2$.
 Notice that, in this case,  $N=n-1$ and we have $n$ many 
 different choices of $b_2$ because $Q$ is an $n$-dimensional simple polytope. 

Next, we construct the sequence inductively. Given  
$(B_k, E_k, b_k)$,  the next term 
$(B_{k+1}, E_{k+1}, b_{k+1})$ is defined as follows. First we 
consider a polytopal complex 
$$\mathcal{C}_{k+1} =
\{ E\in \mathcal{C}_{k} \mid b_{k} \notin V(E)\}.$$
Then, $B_{k+1}$ is defined by its underlying set 
$|\mathcal{C}_{k+1}|$. 
We choose a vertex $b_{k+1}$ in $V(B_{k+1})$ satisfying the 
following condition:
\begin{align*}\tag{\small{$\blacklozenge$}}\label{def_weak_free_vertex}
&``b_{k+1} \text{ has a neighborhood 
homeomorphic to } \R^{N}_{\geq 0} \text{ as manifold with corners}, \\
&\text{for some } N\in \{1, \dots, \dim B_{k+1}\} "
\end{align*}
and $E_{k+1}$ defined to be a unique face of $B_{k+1}$ containing 
$b_{k+1}$ with $\dim E_{k+1}=N$. 
\begin{definition}\label{def_free_vertex}
We call a vertex $v$ in $B_k$ a \emph{free vertex}, if it has a neighborhood in $B_k$
that is diffeomorphic to $\R^N_{\geq 0}$ as manifold with corners, for some 
$N\in \{1, \dots, \dim B_k\}$. We denote the set of free vertices in $B_k$
by $FV(B_k)$.
\end{definition}

 The proof of Proposition \ref{prop_existence_of_ret} below guarantees the 
existence of free vertices at each step. Finally, the sequence stops if 
the sequence reaches a vertex, i.e., 
$B_\ell=E_\ell=b_\ell \in V(Q)$. Essentially, we can think of
a retraction sequence as an iterated choice of free
vertices at each step.  Figure \ref{fig_ret_of_vc_of_cube} 
shows an example of retraction sequence for the vertex cut of a 
cube, where the colored face of each $B_k$ indicates $E_k$ for 
$k=1, \dots, 10$. 

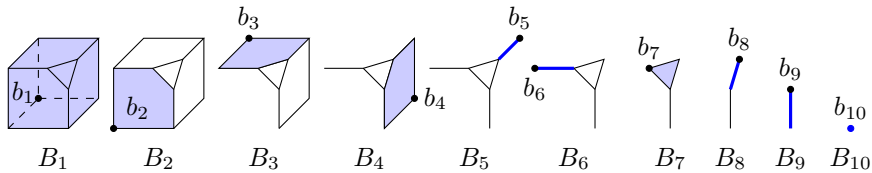
\begin{figure}
        \begin{tikzpicture}[scale=0.4]
        		\draw[fill=blue!20, blue!20] 
			(0,0)--(2,0)--(3,1)--(3,3)--(1,3)--(0,2)--cycle;
		\draw(0,0)--(2,0)--(2,1.3)--(1.3,2)--(0,2)--cycle;
		\draw (2,1.3)--(2.3,2.3)--(1.3,2);
		\draw (0,2)--(1,3)--(3,3)--(2.3,2.3);
		\draw[dashed] (1,3)--(1,1)--(0,0);
		\draw (3,3)--(3,1)--(2,0);
		\draw[dashed]  (1,1)--(3,1);
		\node[left] at (1.2,1.2) {$b_1$};
		\draw[fill] (1,1) circle [radius=0.1];
		\node at (1.5,-1) {$B_1$};
		
		\begin{scope}[xshift=35mm]
		\draw[fill=blue!20]
		 (0,0)--(2,0)--(2,1.3)--(1.3,2)--(0,2)--cycle;
		\draw (2,1.3)--(2.3,2.3)--(1.3,2);
		\draw (0,2)--(1,3)--(3,3)--(2.3,2.3);
		\draw (3,3)--(3,1)--(2,0);
		\node[above] at (0.8,0) {$b_2$};
		\draw[fill] (0,0) circle [radius=0.1];
		\node at (1.5,-1) {$B_2$};
		
		\begin{scope}[xshift=35mm]
    		\draw (2,0)--(2,1.3)--(1.3,2)--(0,2);
		\draw (2,1.3)--(2.3,2.3)--(1.3,2);
		\draw[fill=blue!20]
		 (0,2)--(1,3)--(3,3)--(2.3,2.3)--(1.3,2)--cycle;
		\draw (3,3)--(3,1)--(2,0);
		\node[above] at (1,3) {$b_3$};
		\draw[fill] (1,3) circle [radius=0.1];
		\node at (1.5,-1) {$B_3$};
		
		\begin{scope}[xshift=35mm]
		\draw (2,0)--(2,1.3)--(1.3,2)--(0,2);
		\draw (2,1.3)--(2.3,2.3)--(1.3,2);
		\draw (3,3)--(2.3,2.3);
		\draw[fill=blue!20] 
		(3,3)--(3,1)--(2,0)--(2,1.3)--(2.3,2.3)--cycle;
		\node[right] at (3,1) {$b_4$};
		\draw[fill] (3,1) circle [radius=0.1];
		\node at (1.5,-1) {$B_4$};
		
		\begin{scope}[xshift=35mm]
		\draw (2,0)--(2,1.3)--(1.3,2)--(0,2);
		\draw (2,1.3)--(2.3,2.3)--(1.3,2);
		\draw[blue, very thick] (3,3)--(2.3,2.3);
		\node[above] at (3,3) {$b_5$};
		\draw[fill] (3,3) circle [radius=0.1];
		\node at (1.5,-1) {$B_5$};
		
		\begin{scope}[xshift=35mm]
		\draw (2,0)--(2,1.3)--(1.3,2)--(0,2);
		\draw (2,1.3)--(2.3,2.3)--(1.3,2);
		\node[below] at (0,2) {$b_6$};
		\draw[blue, very thick] (0,2)--(1.3,2);
		\draw[fill] (0,2) circle [radius=0.1];
		\node at (1.3,-1) {$B_6$};
		
		\begin{scope}[xshift=25mm]
		\draw (2,1.3)--(2,0);
		\draw[fill=blue!20]  
		(2,1.3)--(2.3,2.3)--(1.3,2)--cycle;
		\node[above] at (1.3,2) {$b_7$};
		\draw[fill] (1.3,2) circle [radius=0.1];
		\node at (2,-1) {$B_7$};

		\begin{scope}[xshift=20mm]
		\draw (2,0)--(2,1.3)--(2.3,2.3);
		\draw[blue, very thick] (2.3,2.3)--(2,1.3);
		\node[above] at (2.3,2.3) {$b_8$};
		\draw[fill] (2.3,2.3) circle [radius=0.1];
		\node at (2,-1) {$B_8$};
		
		\begin{scope}[xshift=20mm]
		\draw[blue, very thick] (2,0)--(2,1.3);
		\node[above] at (2,1.3) {$b_9$};
		\draw[fill] (2,1.3) circle [radius=0.1];
		\node at (2,-1) {$B_9$};
		
		\begin{scope}[xshift=20mm]
		\node[above] at (2,0) {$b_{10}$};
		\draw[fill=blue, blue] (2,0) circle [radius=0.1];
		\node at (2,-1) {$B_{10}$};
		
		\end{scope}
		\end{scope}
		\end{scope}		
		\end{scope}
		\end{scope}
		\end{scope}
		\end{scope}
		\end{scope}				
		\end{scope}
		
        \end{tikzpicture}
        \caption{A retraction sequence of a vertex cut of the cube.}
        \label{fig_ret_of_vc_of_cube}
\end{figure}

\begin{proposition}\label{prop_existence_of_ret}
Every simple polytope has at least one retraction sequence.
\end{proposition}
\begin{proof}
We begin by following the argument of \cite[Proposition 3.1]{DJ}. 
First, we realize $Q$ as a convex polytope in $\R^n$ and 
choose a vector $u\in \R^n$ such that 
$$\langle u, v\rangle \neq \langle u, w\rangle,~
\text{whenever }v \neq w \in V(Q) \subset \R^n,$$
with respect to the Euclidean inner product $\langle~,~\rangle$.
Let $e:=e(vw)$ be the oriented edge with the initial vertex 
$i(e)=v$ and the terminal vertex $t(e)=w$. Here, the direction of 
$e(vw)$ is given by the following rule:
$$i(e)=v~ \text{and}~t(e)=w,~~\text{if and only if}~ 
\langle u,v \rangle < \langle u, w\rangle,$$
which makes the one skeleton of $Q$ into a directed graph. 
	
Let $ind(v)$ be the number of inward edges at $v$ and we call 
$ind(v)$ the index of $v$ (with respect to the choice of generic 
vector $u$).  Then, for each face $E \subset Q$, there exists the 
unique vertex $v$ of $E$ having the maximal index among the 
vertices in $E$. Moreover, $E$ is locally diffeomorphic to 
$\R^{ind(v)}_{\geq 0}$ around $v$. 
Conversely, given  a vertex $v\in V(Q)$, there exists a unique face 
$E_v$ such that $\dim E_v= ind(v)$.	

Let $\{b_k\}_{k=1}^\ell$ be a sequence  of vertices in $Q$ 
determined by 
$$\langle u, b_1 \rangle > \langle u, b_2\rangle > \dots
>  \langle u, b_\ell \rangle.$$
Notice that $ind(b_1)=n=\dim Q$, and $ind(b_\ell)=0$. 
Now, we claim that the following sequence 
$$\left\{ \left(B_k:= \bigcup_{j\geq k} E_{b_j}, E_{b_k}, b_k 
\right) \right\}_{k=1, \dots, \ell},$$
where $E_{b_k}$ is a unique face containing $b_k$ with 
$\dim E_{b_k} = ind(b_k)$, is a retraction sequence 
of $Q$. Indeed, for each $k\in \{1, \dots, \ell-1\}$, we have
$\langle u, b_k \rangle > \langle u, v \rangle$, for all 
$v\in V(B_k)\setminus \{b_k\}$. Hence, there are no outgoing 
edges from $b_k$ in $B_k$, which implies that $b_k$ has a
neighborhood in $E_{b_k} \subseteq B_k$ homeomorphic to 
$\R^{ind(b_k)}_{\geq 0}$ as manifold with corners. 
\end{proof}

We denote by  $\mathfrak{R}(Q)$ the set of all 
retraction sequences of $Q$ and by $\mathfrak{B}(Q)$ the set of 
all possible $B_i$'s which appear in $\mathfrak{R}(Q)$. 
Evidently, both $\mathfrak{R}(Q)$ and $\mathfrak{B}(Q)$ are finite
 sets, because we have finitely many choice of free vertices 
at each step. 

\begin{remark}
The retraction sequence has a strong relation with
\emph{shelling} of a simplicial complex. We are preparing an 
independent article \cite{BSS-shelling} about the exact 
correspondence and some other interesting properties. 
\end{remark}


\section{Toric orbifolds and orbifold lens spaces}
\label{sec_toric_orb_and_orb_lens_sp}
In this section we recall the \emph{characteristic pairs}
$(Q,\lambda)$ of \cite{DJ} and \cite{PS}, and explain the way in 
which they are used to construct \emph{\quasitoric orbifolds} 
$X=X(Q,\lambda)$. 
If $\lambda$ obeys Davis and Januszkiewicz's condition $(*)$
(see \cite[page 423]{DJ}), then $X$ is smooth and even; so one of 
the main goals of this paper is to establish
Theorem \ref{thm_gcd_theorem}, which focuses on 
singular cases,  and states a sufficient condition for the orbifold 
$X$ to be even. In this section, to complete the proof of Theorem 
\ref{thm_gcd_theorem}, we commandeer two additional types of 
spaces, namely the \emph{invariant subspaces} of $X$ which 
arise as the preimage of faces via the orbit map, 
and the \emph{orbifold lens spaces} that arise as quotients of 
odd dimensional spheres by the actions of certain finite groups 
associated to $\lambda$. 

%


\subsection{Toric orbifolds}\label{subsec_toric_orb}
In this subsection, we discuss a combinatorial definition of toric orbifolds. 
Let $Q$ be an $n$-dimensional simple convex polytope in $\R^n$ 
and $\mathscr{F}(Q)=\{F_1, \dots, F_m\}$ the codimension one 
faces of $Q$ which are called \emph{facets}. 
\begin{definition}\label{def_char_function}
A function $\lambda \colon \mathscr{F}(Q) \to \Z^n$ is called a
\emph{rational characteristic function} 
(or \emph{$\mathcal{R}$-characteristic function}) 
for $Q$ if it satisfies the following 
condition: 
\begin{equation}\label{eq_non-sing_cond.}
	\big\{ \lambda(F_{i_1}), \dots, \lambda(F_{i_k}) \big\} 
	\text{ is linearly independent, whenever } 
	\bigcap_{j=1}^k F_{i_j} \neq \varnothing.
\end{equation}
We denote $\lambda_i=\lambda(F_i)$ and call it an 
\emph{$\mathcal{R}$-characteristic vector} 
assigned to the facet $F_i$. 
The pair $(Q, \lambda)$ is called an $\mathcal{R}$-characteristic pair.
\end{definition}

\begin{remark}\label{rmk_char_matrix}
\begin{enumerate}
\item In the literature about toric manifolds,
the pair $(Q, \lambda)$ satisfying the condition $(\ast)$ in \cite[p. 423]{DJ}
is called a \emph{characteristic pair}.  
\item For convenience, we usually express an 
$\mathcal{R}$-characteristic function 
$\lambda$ as an $(n\times m)$-matrix $\Lambda$ by listing 
$\lambda_i$'s as column vectors. We call $\Lambda$ an 
\textit{$\mathcal{R}$-characteristic matrix} associated to $\lambda$. 	
\item It is easy to check that it suffices to satisfy the linearly 
independence at each vertex which is an intersection of $n$ facets. 
\end{enumerate}
\end{remark}

One canonical example of such function can be given by a 
\emph{simple lattice polytope} which is a convex hull of finitely many points in the integer 
lattice $\Z^n\subset \R^n$ and simple. Namely, we can naturally assign an 
$\mathcal{R}$-characteristic vector the 
primitive normal vector on each facet of a simple lattice polytope. In Section 
\ref{sec_cohom_ring_and_ppoly}, we shall see this again as 
primitive vectors 
of $1$-dimensional cones in a normal fan associated to a simple 
lattice polytope. 

For $x\in Q$, we denote by $E(x)$ the face of $Q$ which contains
$x$ in its interior. If $E(x)$ is a face of codimension $k$, then it is 
a unique intersection of $k$ facets 
$F_{i_1},\dots, F_{i_k}$. We also denote by $T_{E(x)}$ 
the subtorus of standard $n$-dimensional torus $T^n$, determined 
by $\lambda_{i_1}, \dots, \lambda_{i_k}$. To be more precise, we may
regard the target space $\Z^n$ of $\lambda$ as the $\Z$-submodule
of the Lie algebra of $T^n$, and $T_{E(x)}$ is the torus generated 
by the exponential image of the lines determined by 
the $\mathcal{R}$-characteristic 
vectors $\lambda_{i_1}, \dots, \lambda_{i_k}$. 

Now, we define an equivalence relation $\sim$ on the product 
$T^n \times Q$ by
\begin{equation}\label{eq_equiv_rel_of_toric_orb}
(t, x)\sim_\lambda (s,y) \text{ if and only if } x=y \text{ and }
t^{-1}s \in T_{E(x)}.
\end{equation}
The quotient space 
$$ X(Q, \lambda)=(T^n \times Q)/\sim_\lambda$$
has an orbifold structure with a natural $T^n$-action induced by 
the group operation, see Section 2 in \cite{PS}. Clearly, the orbit 
space of $T^n$-action on $X(Q, \lambda)$ is $Q$. Let 
\begin{equation}\label{eq_orbit_map}
\pi: X(Q, \lambda) \to Q,\quad   \pi([t,x]_{\sim_\lambda}) = x
\end{equation}
be the orbit map, where $[t,x]_{\sim_\lambda}$ is the equivalence class of $(t,x)$
with respect to $\sim_\lambda$. The space $X(Q, \lambda)$ is 
called the \quasitoric orbifold associated to the combinatorial pair 
$(Q, \lambda)$. 

In analyzing the orbifold structure of $X(Q, \lambda)$, Poddar and
 Sarkar, \cite[Subsection $2.2$]{PS}, gave an axiomatic definition 
 of \quasitoric orbifolds, which generalizes the axiomatic definition 
 of \quasitoric manifolds of \cite{DJ}.


\subsection{Invariant subspaces}\label{subsec_inv_subsp}
In this subsection, we study the $\mc{R}$-characteristic pair of 
some invariant subspaces of $X(Q, \lambda)$. 
Let $E=F_{i_1}\cap  \dots \cap  F_{i_k}$ be a face of $Q$, 
where $F_{i_1}, \dots, F_{i_k}$ are facets. We can define a 
natural projection 
\begin{equation}\label{eq_def_of_rho_E}
\rho_E \colon \Z^n \to \Z^n / (({\rm span}\{\lambda_{i_1}, \dots, 
\lambda_{i_k}\}\otimes_\Z \R) \cap \Z^n),
\end{equation}
where the target space is isomorphic to $\Z^{n-k}$, because 
$({\rm span}\{\lambda_{i_1}, \dots, \lambda_{i_k}\}\otimes_\Z \R) 
\cap \Z^n$ is a rank $k$ direct summand of $\Z^n$.
Notice that the rank of the target space of $\rho_E$ is same 
as the dimension of $E$. We consider $E$ as an independent simple 
polytope, and denote the set of facets of $E$ by 
$$\mathscr{F}(E) = \{ E\cap F_j \mid F_j \in \mathscr{F}(Q)~ 
\text{and}~ j\neq i_1, \dots, i_k, ~\text{and}~ E\cap F_j\neq 
\varnothing\}.$$

Now, the map $\rho_E$ together with $\lambda$ yields an  
$\mathcal{R}$-characteristic function 
\begin{equation}\label{eq_def_of_induced_char_ftn}
\lambda_E \colon \mathscr{F}(E) \to \Z^{n-k},
\end{equation}
on $E$ defined for $\lambda_E(E\cap F_j)$ to be 
the primitive vector of $(\rho_E\circ \lambda)(F_j)$.  
Indeed, the condition \eqref{eq_non-sing_cond.} naturally follows 
from $\lambda$. 

Hence, we get an $\mathcal{R}$-characteristic pair 
$(E, \lambda_E)$ from $(Q, 
\lambda)$, which yields another toric orbifold 
$$X(E, \lambda_E):= (T^{n-k} \times E)/\sim_{\lambda_E},$$
where the equivalence relation $\sim_{\lambda_E}$ defined in 
a manner similar to \eqref{eq_equiv_rel_of_toric_orb}.

\begin{proposition}\cite[Section 2.3]{PS}\label{prop_inv_subsp_is_toric_orb}
Let $\pi \colon X(Q, \lambda) \to Q$ and $(E, \lambda_E)$ be as 
above. Then, $\pi^{-1}(E)$ is a $T^n$-invariant suborbifold. 
Moreover, it is a toric orbifold homeomorphic to $X(E, \lambda_E)$ as 
a topological space. 
\end{proposition}

The second assertion of the above proposition follows from the 
fact that the circle subgroups determined by $\lambda_E(E\cap F_j)$ 
and $(\rho_E \circ \lambda)(F_j)$, respectively, are identical.  
We also remark that the torus $T^{n-k}$ acting on $X(E, \lambda_E)$
can be identified with the image of the map 
\begin{equation}\label{eq_induced_torus_map}
\bar\rho_E \colon T^n \to T^{n-k},
\end{equation}
which is induced from the map $\rho_E$.

\begin{example}\label{ex_explicit_computation_of_local_group}
Suppose we have an $\mathcal{R}$-characteristic pair 
$(Q, \lambda)$ as
described in Figure \ref{fig_fake_weighted_proj_sp_as_subsp}.
Notice that $Q$ is a $3$-dimensional polytope with 5 facets, say 
$\mathscr{F}(Q)=\{F_1, \dots, F_5\}$. 
Here, we assume that the target space $\Z^3$ of $\lambda$ is 
generated by the standard basis $\{e_1, e_2, e_3\}$.
We choose $E$ to be the facet $F_5$. So, $k=1$ and $n-k=2$.  
Then, the projection 
$$\rho_E \colon \Z^3 \to \Z^3 / \langle e_3 \rangle = 
\left<e_1, e_2, e_3 \right> / \left<e_3\right> \cong \Z^2$$
is onto the first two coordinates. The facets of $E$ are 
$F_2 \cap E, ~F_3\cap E  \text{ and } F_4 \cap E.$
Hence, the map 
$$\lambda_{E} \colon \{F_2 \cap E, ~F_3\cap E,~ F_4 \cap E\}
 \longrightarrow \mathbb{Z}^{2}$$
is defined by 
\begin{align*}
\lambda_{E}(F_2 \cap E) &= \mathcal{\rho}_{E}
(\lambda(F_{2}))=  (2,-1)= 2e_{1}-e_{2},\\
\lambda_{E}(F_3\cap E) &= \mathcal{\rho}_{E}(\lambda(F_{3}))=
(-1,-1)= -e_{1}-e_{2},\\
\lambda_{E}(F_4 \cap E) &= \mathcal{\rho}_{E}(\lambda(F_{4}))=
(-1,2)= -e_{1}+2e_{2}.
\end{align*}
The orbifold corresponding to $(E, \lambda_E)$ is known
to be a fake weighted projective space with weight $(1,1,1)$. We 
refer \cite{Bu} and \cite{Kasp} for the details of fake weighted 
projective space. 
\end{example}

\begin{figure}
        \begin{tikzpicture}[scale=0.5, yscale=0.8]
        	\draw[fill=yellow, yellow] (0,0)--(5,0)--(3,-1)--cycle;
        	\draw (5,0)--(5,4)--(0,4)--(0,0);
	\draw (0,4)--(3,3)--(3,-1)--(0,0);
	\draw (5,4)--(3,3);
	\draw (5,0)--(3,-1);
	\draw[dashed] (0,0)--(5,0);
	\node[left] at (0,4) {$v_{124}$};
	\node[right] at (5,4) {$v_{134}$};
	\node[below] at (2.3,3) {$v_{123}$};
	\node[left] at (0,0) {$v_{245}$};
	\node[right] at (5,0) {$v_{345}$};
	\node[below] at (3,-1) {$v_{235}$};	
	\node at (1.5,1.5) {\small$F_2$};
	\node at (4,1.5) {\small$F_3$};
	\node at (3,3.6) {\small$F_1$};
	\node[below] at (6,3) {\small$F_4$};
	\draw[thick, dotted, ->] (6,3) to [out=120,in=60] (3.5,2);
	\node[above] at (7,0) {\small$F_5$};
	\draw[thick, dotted, ->] (7,0) to [out=270,in=330] (3.5,-0.5);
	\node at (13,2) 
	{\small{$ \Lambda=\begin{blockarray}{ccccc}
\lambda_1 &\lambda_2&\lambda_3&\lambda_4&\lambda_5 \\
\begin{block}{[ccccc]}
2 &2& -1& -1& 0 \\		
3& -1& -1& 2& 0 \\
5& 0 &-2 &2 &1    \\
\end{block}
\end{blockarray}  $}};	
        \end{tikzpicture}
        \caption{}
        \label{fig_fake_weighted_proj_sp_as_subsp}
\end{figure}

\subsection{Orbifold lens spaces}\label{subsec_orb_lens_sp}
Here, we introduce a generalization of \emph{lens complexes}
and study their homology groups. 
Let $\Delta^{n-1}$ be the $(n-1)$-dimensional simplex and 
$\mathscr{F}(\Delta^{n-1})=\{F_1, \dots, F_n\}$ the facets of 
$\Delta^{n-1}$. 
We begin by introducing the following definition. 
\begin{definition}\label{def_rat_char_ftn}
A function $\xi \colon \mathscr{F}(\Delta^{n-1}) \to \Z^n$ is called 
an \emph{$\mathcal{L}$-characteristic function} on $\Delta^{n-1}$ if 
$\{ \xi(F_1), \dots, \xi(F_n)\}$ is linearly independent. We set
$\xi_i := \xi(F_i)$ for $i=1, \dots, n$.  
\end{definition}
Now, we define an equivalence relation $\sim_\xi$ on $T^{n}
\times \Delta^{n-1}$ as follows:
\begin{equation}\label{eq_equiv_rel_orb_lens_sp}
(t,x) \sim_{\xi} (s,y) \text{ if and only if }
x=y \text{ and  } t^{-1}s \in T_{F(x)},
\end{equation}
where $F(x)$ is the face containing $x$ in its interior and 
$T_{F(x)}$ denotes the subtorus of $T^n$ determined by 
$\xi_{i_1},\dots , \xi_{i_k}$, 
if $F(x)=F_{i_1}\cap\dots \cap F_{i_k}$. 
The pair $(\Delta^{n-1}, \xi)$ together with the equivalence 
relation $\sim_\xi$ yields the following quotient space:
$$L(\Delta^{n-1}, \xi):= T^n \times \Delta^{n-1} / \sim_\xi,$$
which we call the \emph{orbifold lens space} associated to 
$(\Delta^{n-1}, \xi)$. 

\begin{proposition}
\label{prop_orb_lens_sp=sphere_finite_gp}
The orbifold lens space $L(\Delta^{n-1}, \xi)$ is homeomorphic to 
the quotient space of the $(2n-1)$-dimensional  sphere $S^{2n-1}$ 
by the action of a finite group 
$G_\xi:=\Z^n/ \text{span} \{\xi_1, \dots, \xi_n\}$. 
\end{proposition}
\begin{proof}
The proof is essentially same as the proof of \cite[Proposition 2.3]{SS}. 
\end{proof}

\begin{remark}\label{rmk_orb_lens_sp_finite_gp}
\begin{enumerate}
\item In \cite{SS}, the function $\xi$ is called  a 
\emph{hyper-characteristic function} if the submodule generated 
by $\{  \xi(F_{i_1}), \ldots, \xi(F_{i_k})\}$ is a direct summand of 
$\ZZ^{n+1}$ of rank $k$, whenever $F_{i_1} \cap \dots \cap 
F_{i_k}$ is nonempty. 
In particular, if $\{  \xi(F_{i_1}), \ldots, \xi(F_{i_n})\}$
is a linearly independent set, then it becomes an 
$\mathcal{L}$-characteristic function. 
\item The action of $G_\xi$ is induced from the standard $T^n$-
action on $S^{2n-1}\subset \C^n$. 
\item The order $|G_\xi|$ of $G_\xi$ 
is exactly same as the determinant of the $n\times n$ matrix $
\left[ \begin{array}{c|c|c}
\xi_1 & \cdots & \xi_n\end{array} \right]$. 
\end{enumerate}
\end{remark}

Proposition 
\ref{prop_orb_lens_sp=sphere_finite_gp} leads us the following 
lemma. 
\begin{lemma}\label{lem_homology_orb_lens_sp}
Let $p_1, \dots,  p_r$ be the prime factors of $|G_\xi|$. Then, 
$$H_j(L(\Delta^{n-1}, \xi))= \begin{cases} \Z & \text{ if } j=0, 2n-1\\
G_j &\text{ if } 1\leq j \leq 2n-2 \end{cases}, $$ 
where $G_j =(\Z/p_1^{a_{j_1}}\Z) \oplus \dots \oplus 
(\Z/p_r^{a_{j_r}}\Z)$ for some non-negative integers 
$a_1, \dots, a_r$. 
\end{lemma}
\begin{proof}	
We see $H_0(L(\Delta^{n-1}, \xi))\cong \ZZ$ trivially. The 
isomorphism  $H_{2n-1}(L(\Delta^n, \xi))\cong \Z$ follows 
because $G_\xi$ action on $S^{2n-1}$ is induced from the 
standard action of $T^{n}$ on $S^{2n-1} \subset \C^{n}$, 
which is orientation preserving. 
For $j\in \{1, \dots, 2n-2\}$, recall the following isomorphism 
which can be obtained from the classical result for an action of 
a finite group $G$ on a locally compact Hausdorff space $X$:
\begin{equation}\label{eq_transfer_isom}
H^\ast(X/G;\mathbf{k}) \cong H^\ast(X;\mathbf{k})^G,
\end{equation}
where $\mathbf{k}$ is a field of characteristic zero or prime to 
$|G|$, see \cite[III.2]{Bor}. 
	
We apply the isomorphism \eqref{eq_transfer_isom} to the 
orbifold lens space $L(\Delta^{n-1}, \xi)\cong S^{2n-1}/G_\xi$. 
Since $H^{j}(S^{2n-1};\mathbf{k})^{G_\xi}=0$ for $j=1, \dots 2n-2$, 
the claim is proved by the universal coefficient theorem. 
\end{proof}

Toric orbifolds, invariant subspaces, and orbifold lens spaces
motivate the definition of retraction sequences which 
we introduced in the previous section. 
For a vertex $v\in V(Q)$,  let $B_2$ be the 
union of all faces in $Q$ which does not contain $v$.  
Next, we consider a hyperplane 
\begin{equation}\label{eq_hyperplane}
H(v):=\{x \in \R^n \mid \langle x, p_v \rangle = q_v \} ,
\end{equation}
where $\langle~ ,~ \rangle$ denotes the Euclidean inner product,   
$p_v\in \R^n$ and $q_v\in \R$ are chosen in such a way that 
\begin{itemize}
\item $\{x \in \R^n \mid \langle x, p_v \rangle + q_v  \geq 0\}
	\cap V(Q)=\{v\}$,
\item $\{x \in \R^n \mid \langle x, p_v \rangle + q_v  \leq 0\}
	\cap V(Q)=V(Q) \setminus \{v\}$.
\end{itemize}
Then, $\Delta_Q(v): =Q\cap H(v)$ is an $(n-1)$-dimensional 
simplex, because $Q$ is a simple polytope of dimension $n$,
see Figure \ref{fig_geom_interpretation_of_ret_seq}. 

An $\mathcal{L}$-characteristic pair arises naturally from 
an $\mathcal{R}$-characteristic pair $(Q, \lambda)$ for each vertex $v$ of $Q$.
Indeed, if $v=F_{j_1} \cap \dots \cap F_{j_n}$, 
we denote the set of facets of $\Delta_Q(v)$ by 
$$ \mathscr{F}(\Delta_Q(v))= \{\Delta_Q(v) \cap F_{j_1}, 
\dots, \Delta_Q(v) \cap F_{j_n}\}.$$ 
Now we define a function 
\begin{equation}\label{eq_rat_char_ftn}
\xi_{Q,v} \colon \mathscr{F}(\Delta_Q(v))\to \Z^n,
\end{equation}
by $\xi_{Q,v}(\Delta_Q(v) \cap F_{j_r}) = \lambda(F_{j_r}),~ 
r=1,\dots, n$. Notice that $\dim \Delta_Q(v)=n-1$, but the rank 
of target space is $n$. 
Since $\{\lambda(F_{i_1}) \dots, \lambda(F_{i_n})\}$ is a linearly 
independent set, the function $\xi_{Q,v}$ is an $\mc{L}$-characteristic 
function on $\Delta_Q(v)$.

\section{Vanishing odd degree homology and torsion freeness}
\label{sec_van_odd_homology}
Now, we combine the ingredients which we introduced in the 
previous sections to derive a sufficient condition for vanishing 
odd degree cohomology of \quasitoric orbifolds. 
In particular, let 
$X(Q,\lambda)$ be a toric orbifold and the triple 
$\{(B_k, E_k, b_k)\}_{k=1}^\ell$ be a retraction sequence of $Q$.
Given an $n$-dimensional polytope $Q$, 
we begin by defining the following map
\begin{equation}\label{eq_h_v_straight_line_from_v}
h_{b_1} \colon \Delta_Q(b_1) \to B_2=
	\bigcup\{E \mid E \text{ is face of }Q, ~b_1\notin V(E)\}
\end{equation} by 
$h_{b_1}(x)= B_2\cap (\text{line passing through }x\text{ and }b_1)$, 
where $\Delta_Q(b_1)$ is an $(n-1)$-dimensional simplex. 
The map $h_{b_1}$ is well-defined, because $Q$ is convex. 
The left picture of Figure 
\ref{fig_geom_interpretation_of_ret_seq} shows the map $h_{b_1}$ when 
$Q$ is a prism.

\begin{figure}
\begin{tikzpicture}[scale=0.7]
	\draw[fill=blue!20] (21/4, 1.5)--(25/4,2)--(4,3)--cycle;
	\draw[fill=yellow!50, yellow!50] 
		(1/2,3)--(0,0)--(5,0)--(7,1)--(2,1)--cycle;
	\draw[thick,dotted,->] (1/2+5, 3)--(2.6,0.6);
	\draw[fill] (4.8, 4.8*5/6-19/12) circle [radius=0.05];
	\node[right] at (4.8, 4.8*5/6-21/12) {$x$};
	\node[right] at (2.8,0.5) {$h_{b_1}(x)$};
	\node at (1,1) {$B_2$};
	\draw[fill] (2.5,0.5) circle [radius=0.05];
	\draw (5,0)--(0,0)--(1/2,3);
	\draw (1/2,3)--(2,1)--(7,1)--(5,0);
	\draw[dashed] (1/2 +5,3)--(7,1);
	\draw[dashed] (1/2,3)--(1/2+5,3)--(5,0);
	\draw[dashed] (2,1)--(0,0);
	\draw[fill] (1/2+5, 3) circle [radius=0.05];
	\node[right] at (1/2 +5,3) {$b_1$};
	
	\draw[->] (6.5, 2.5)--(5.5, 4.8*5/6-2);
	\node[right] at (6.5,2.5) {$\Delta_Q(b_1)$};
	
	\begin{scope}[xshift=90mm]
	\draw[dashed] (0,0)--(5,0)--(7,1)--(2,1)--cycle;
	\draw[thick] (2,1)--(0,0)--(5,0);
	\draw[fill=yellow!50] (1/2,3)--(0,0)--(2,1)--cycle;
	\draw[thick] (6,1/2)--(5,1);
	\draw[thick, dotted, ->] (7,1)--(1.1,1/2);
	\node[above] at (1,1/2+0.1) {$h_{b_2}(x)$};
	\draw[thick, dotted, ->] (7,1)--(4.1,0.1);
	\node[below] at (4,0) {$h_{b_2}(y)$};
	\draw[fill] (7,1) circle [radius=0.05];
	\draw[fill] (4,0) circle [radius=0.05];
	\draw[fill] (1,1/2) circle [radius=0.05];
	\node[above] at (7,1) {$b_2$};
	\draw[fill] (37/7,6/7) circle [radius=0.05];
	\node[below] at (37/7,6/7) {$x$};
	\draw[fill] (29/5,3/5) circle [radius=0.05];
	\node[below] at (29/5,3/5) {$y$};
	
	\draw[->] (5,2) to [out=0,in=60] (5.1,0.95);
	\node[left] at (5,2) {$\Delta_{B_2}(b_2)$};
	\end{scope}	
\end{tikzpicture}
\caption{The geometric interpretation of a retraction sequence.}
\label{fig_geom_interpretation_of_ret_seq}
\end{figure}
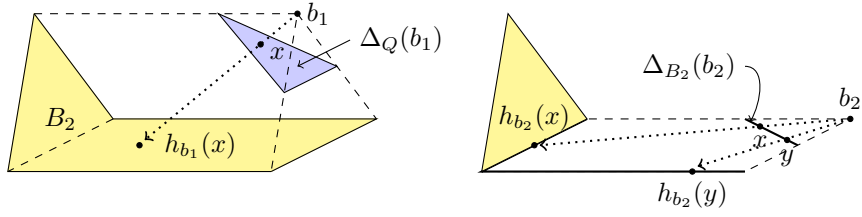

Define a map 
\begin{equation}\label{eq_map_before_quotient_by_eq_rel}
f_{b_1} \colon T^n \times \Delta_Q(b_1) \to \bigcup_{E:\text{ face of }
B_2}  T^{\dim E} \times E, 
\end{equation}
by $f_{b_1} (t,x)=( \bar{\rho}_E(t), h_{b_1}(x))$, 
where $\bar\rho_E$ is defined in 
\eqref{eq_induced_torus_map}. This induces the the map 
\begin{equation}\label{eq_map_lensspace_to_remaining}
	\bar{f}_{b_1} \colon L(\Delta_Q(b_1), \xi_{Q, b_1})  \to  
	\bigcup_{E:\text{face of }B_2}X(E, \lambda_E),
\end{equation}
where $\xi_{Q, b_1}$ is an $\mathcal{L}$-characteristic function 
defined in \eqref{eq_rat_char_ftn}. 
This map is well-defined from the proof of the following 
proposition.

\begin{proposition}\label{prop_cofibration}
The following diagram commutes:
\begin{equation}\label{eq_cofiber_seq}  
\xymatrix{T^n \times \Delta_Q(b_1) \ar[r]^-{f_{b_1}} 
\ar[d]^{/\sim_{\xi_{Q, b_1}}}  & 
\bigcup_{E:\text{ face of }B_2} ( T^{\dim E} \times E)
\ar[d]^{/ \sim_{\lambda_E}}&\\
L(\Delta_Q(b_1), \xi_{Q, b_1}) \ar[r]^-{\bar{f}_{b_1}}& 
\bigcup_{E:\text{ face of }B_2}X(E, \lambda_E) 
\ar@{^{(}->}[r] & X(Q, \lambda),  }
\end{equation}
where the equivalence relations $\sim_{\xi_{Q, b_1}}$ and $\sim_{\lambda_E}$ are
defined similarly as in \eqref{eq_equiv_rel_orb_lens_sp} and  
\eqref{eq_equiv_rel_of_toric_orb}, respectively. 
Moreover, the bottom row is a cofiber sequence. i.e., 
$X(Q,\lambda)$ is homotopy equivalent to 
the mapping cone $c(\bar{f}_{b_1})$ of the map $\bar{f}_{b_1}$.
\end{proposition}
\begin{proof}
	We first show that the map $\bar{f}_{b_1}$ is well-defined. 
	Suppose we choose two different representatives, 
	say $[t,x]_{\sim_{\xi_{Q, b_1}}}$ and $[s,y]_{\sim_{\xi_{Q, b_1}}}$ in 
	$L(\Delta_Q(b_1), \xi_{Q, b_1})$.  
	Then $x=y$, so $h_{b_1}(x)=h_{b_1}(y)$. Moreover, 
	if $x\in \Delta_Q(b_1) \cap F$ for some face $F$ of $Q$, 
	then $h_{b_1}(x) \in F \cap E$ 
	for some face $E$ of $B_2$. 
	Hence the map $\bar{\rho}_E$ sends the subtorus  $T_{F(x)}$  
	of $T^n$ to $T_{E(h_{b_1}(x))}$ the subtorus of $T^{\dim E}$. 
	Since the map $\bar\rho_E$ is a homomorphism, 
	if $t^{-1}s\in T_{F(x)}$, then 
	$$\bar{\rho}_E(t)^{-1}\bar{\rho}_E(s)=
	\bar{\rho}_E(t^{-1}s)\in T_{E(h_{b_1}(x))}.$$
	
	Let $C\Delta_Q(b_1)$ be the cone on $\Delta_Q(b_1)$ in $Q$ 
	with the cone point $b_1$. 
	Then, we can decompose $Q$ into two part as follows:
	\begin{equation}\label{eq_polytope=simplex_union_remaining}
	Q = C\Delta_Q(b_1) \cup_{\Delta_Q(b_1)} 
	\bar{Q \setminus C\Delta_Q(b_1)}.
	\end{equation}
	Now, we define a continuous surjective map 
	$$g_{b_1} \colon \bar{Q \setminus C\Delta_Q(b_1)} \to B_2$$ 
	in a manner similar to \eqref{eq_h_v_straight_line_from_v}. 
	We use it to define a straight line homotopy by
	$$\phi \colon \bar{Q \setminus C\Delta_Q(b_1)} \times I \to 
\bar{Q \setminus C\Delta_Q(b_1)},~~(x, u) \mapsto (1-u)x + u \cdot
 g_{b_1}(x),$$ 
which preserves the face structure. Thus, $\phi$ induces a homotopy 
$$\hat{\phi} \colon (T^n \times \bar{Q \setminus C\Delta_Q(b_1)})
/\sim_{\lambda}  \times I \to (T^n \times \bar{Q \setminus 
C\Delta_Q(b_1)})/\sim_\lambda$$ 
defined by 
$$([t, x]_{\sim_\lambda}, u) \mapsto [t, c(x, u)]_{\sim_\lambda}.$$
Note that at $u=0$ the map $\hat{\phi}$ is identity and at $u=1$ 
the image of $\hat{\phi}$ is $\pi^{-1}(B_2)$. 

Then, 
\begin{align*}
X(Q, \lambda)&=\pi^{-1}(C\Delta_Q(b_1)) \cup_{L(\Delta_Q(b_1), \xi_{Q, b_1})}
 \pi^{-1}( \bar{Q \setminus C\Delta_Q(b_1)})\\
&\simeq C\big(L(\Delta_Q(b_1),\xi_{Q, b_1} )\big) 
\cup_{L(\Delta_Q(b_1), \xi_{Q, b_1})} \pi^{-1}({B_2})\\
&\simeq c(\bar{f}_{b_1}).
\end{align*}
Hence, the result follows. 
\end{proof}

Now the following isomorphisms are straightforward from the 
cofiber sequence. 
\begin{align*}
	H_\ast(X(Q,\lambda), \pi^{-1}(B_2))&\cong 
	H_\ast(C(L(\Delta_Q(b_1), \xi_{Q, b_1})), \pi^{-1}(B_2))\\
	& \cong \widetilde{H}_{\ast-1}(L(\Delta_Q(b_1), \xi_{Q, b_1})).
\end{align*}
Those two isomorphisms come from the excision and the long exact 
sequence of the pair, respectively. 

So far, we have considered the $B_1(=Q)$ and $B_2$ which is the 
second term of a retraction sequence starting by choosing 
$b_1 \in FV(Q)=V(Q)$. However,   
we can apply the similar arguments to each pair $B_i$ and $B_{i+1}$ 
in a retraction sequence. This  
leads us the following Lemma whose proof is essentially same as 
that of Proposition \ref{prop_cofibration}. 
Before we state the lemma, we first set up the notations: 
Given a retraction sequence 
$\{(B_k, E_k, b_k)\}_{k=1}^\ell$ of $Q$, 
\begin{itemize}
\item $\Delta_{E_k}(b_k):=E_k \cap H(b_k)=B_k \cap H(b_k)$: the simplex obtained by cutting the vertex 
$b_k$ from $B_k$.
\item $\xi_{E_k, b_k}$: an $\mathcal{L}$-characteristic function on $\Delta_{E_k}(b_k)$ defined in a similar manner to \eqref{eq_rat_char_ftn}
induced from $\lambda_{E_k}$. 
\item The map 
$$\bar{f}_{b_k} \colon L(\Delta_{E_k}(b_k), \xi_{E_k, b_k}) \to \bigcup_{E: 
\text{face of } B_{k+1}} X(E, \lambda_E)=\pi^{-1}(B_{k+1})$$
is defined similarly to \eqref{eq_map_lensspace_to_remaining} by 
regarding $E_k$ as a simple polytope. 
\end{itemize}
The right hand side of Figure \ref{fig_geom_interpretation_of_ret_seq}
illustrates the case of the $3$-dimensional prism.
The argument above extends to prove the following lemma.
\begin{lemma}\label{lem_cofiber_seq}
	The sequence  
	\begin{equation}\label{eq_cofiber_seq_from_ret_seq}
	\xymatrix{L(\Delta_{E_k}(b_k), \xi_{E_k, b_k}) \ar[r]^-{\bar{f}_{b_k}}& 
	\pi^{-1}(B_{k+1}) \ar@{^{(}->}[r] & \pi^{-1}(B_k)  } 
	\end{equation}
	is a cofiber sequence. Moreover, 
$H_\ast(\pi^{-1}(B_k), \pi^{-1}(B_{k+1})) \cong \widetilde{H}_{\ast-1}
(L(\Delta_{E_k}(b_k), \xi_{b_k}))$. 
\end{lemma}

Recall from the Remark \ref{rmk_orb_lens_sp_finite_gp} that an 
$\mathcal{L}$-characteristic function $\xi \colon \mathscr{F}(\Delta^{n-1}) \to \Z^n$
defines a finite abelian group $\Z^n/ \text{im}(\xi)$. 
An $\mathcal{R}$-characteristic pair $(Q, \lambda)$ induces an 
$\mathcal{R}$-characteristic pair $(E, \lambda_E)$ as in \eqref{eq_def_of_induced_char_ftn}
for any face $E$ of $Q$. 
%
Let $E$ be a $k$-dimensional face of $Q$ ($k \leq n$) and $v\in V(E)$. 
Then, $\Delta_E(v):= E \cap H(v)$ is a $(k-1)$-simplex.
These give us an $\mathcal{L}$-characteristic function
$$\xi_{E, v}\colon \mathscr{F}( \Delta_E(v)) \to \Z^{k} $$
which is defined in a similar manner to \eqref{eq_rat_char_ftn}
associated to $\lambda_E \colon \mathscr{F}(E) \to \Z^k$ and $v \in V(E)$.
This $\mathcal{L}$-characteristic function defines the finite group
\begin{equation}\label{eq_def_of_G_E(v)} 
G_E(v):= \Z^k / \text{im} (\xi_{E,v}).
\end{equation}
If  $G_E(v)$ is trivial, we call a point $\pi^{-1}(v)$ in 
$\pi^{-1}(E)\cong X(E, \lambda_E)$ a \emph{smooth} point, otherwise a 
\emph{singular} point, 
where $\pi \colon X(Q, \lambda) \to Q $ is the orbit map defined in \eqref{eq_orbit_map}. 

Furthermore, for each $B\in \mathfrak{B}(Q)$ and a free vertex $v\in FV(B)$, 
there exists a unique maximal face, say $E_v$, of $B$ containing $v$. 
Hence, for each  $B\in \mathfrak{B}(Q)$, we denote by 
\begin{equation}\label{eq_def_G_B(v)}
G_B(v) := G_{E_v}(v), 
\end{equation}
whenever $v$ is a free vertex in $B$. 

\begin{proposition}\label{prop_local_group_subgroup}
	Given a vertex $v\in V(Q)$, let $E$ and $E'$ be two faces 
	containing $v$ such that $E$ is a face of $E'$. Then, 
	$|G_{E}(v)|$ divides  $|G_{E'}(v)|$. 
\end{proposition}
\begin{proof}
From Proposition \ref{prop_inv_subsp_is_toric_orb}, 
we may assume that $E'=Q$ without loss of generality.  
Suppose that $E$ is a face of $Q$ with codimension $k$. 
For convenience, we further assume that $E=F_1\cap \dots \cap F_{k}$
and $v=F_1 \cap\dots \cap  F_k \cap F_{k+1} \cap \dots \cap  F_n$, where 
$F_i$'s are facets of $Q$. 

From \eqref{eq_rat_char_ftn} and \eqref{eq_def_of_G_E(v)}, we have 
$G_Q(v)=\Z^n / \langle \lambda(F_1), \dots, \lambda(F_n) \rangle$ and 
$G_E(v)=\Z^k / \langle \lambda_E(E\cap F_{k+1}), \dots ,\lambda_E(E \cap F_n)\rangle$. 
Now we consider the following composition 
$$\xymatrix{\Z^n \ar@{->>}[r]^-{\rho_E} & \Z^k \ar@{->>}[r] & 
\Z^k / \langle \lambda_E(E\cap F_{k+1}), \dots ,\lambda_E(E \cap F_n)\rangle, }$$
where the map $\rho_E$ is defined in \eqref{eq_def_of_rho_E} and 
the second map is the natural surjection determined by \eqref{eq_def_of_induced_char_ftn}. 
Observe that the kernel of the previous composition contains $\langle \lambda(F_1), \dots, \lambda(F_n) \rangle$. 
Hence, we get a surjective  group homomorphism from $G_Q(v)$ to $G_E(v)$. 
The result follows from the Lagrange's theorem in group theory. 
\end{proof}

We are now in a position to prove Theorem \ref{thm_gcd_theorem}.
%

\begin{proof}[Proof of Theorem \ref{thm_gcd_theorem}]
We prove the claim by the induction on the number of vertices of $B \in \mathfrak{B}(Q)$. 
First, notice that when the retraction sequence reaches an edge or a  
union of edges, say $B_s$, then $\pi^{-1}(B_s)$ is $\CP^1$ or homotopic to a finite wedge of 
$\CP^1$, which implies that $H_\ast(\pi^{-1}(B_s))$ is torsion free and 
concentrated in even degrees. Therefore, if $|V(B)| \leq 2$ for $B\in \mathfrak{B}(Q)$, then 
the claim is true. 
	
Now we assume that $\pi^{-1}(B)$ is even for $B\in \mathfrak{B}(Q)$ with  $|V(B)|\leq i-1$. 
To complete the induction, we shall prove that the same holds for $B'\in \mathfrak{B}(Q)$ with  $|V(B')|=  i$.
Given such $B'$, there exist $B\in \mathfrak{B}(Q)$ such that $B$ is obtained from $B'$ by deleting 
all faces containing a free vertex of $B'$. 
To be more precise, let 
$FV(B')=\{v_{i_1}, \dots, v_{i_r}\}$ be the set of free vertices in 
$B'$. Notice that regarding $B'$ as a generic step of a retraction 
sequence in $\mathfrak{R}(Q)$, 
we can produce $r$  many different $B\in \mathfrak{B}(Q)$ with $|V(B)|=i-1$ from $B'$. 
According to the induction hypothesis, we assume that for each  $t=1,\dots, r$, the group 
$H_\ast(\pi^{-1}(B(v_{i_t})))$ is concentrated in even degrees and 
torsion free, where $B(v_{i_t})\in \mathfrak{B}(Q)$ is  
obtained from $B'$ by deleting faces containing $v_{i_t}$. 
This assumption makes sense, because any retraction 
sequence reaches a union of edges. 
	
For simplicity, we fix the following notation: For each free vertex 
$v_{i_t} \in FV(B')$, 
\begin{itemize}
\item $X' :=\pi^{-1}(B')$, $\dim B'= d'=\frac{1}{2}\dim_\R X'$.
\item $X(v_{i_t}):=\pi^{-1}(B(v_{i_t}))$, 
$\dim B(v_{i_t})= d =\frac{1}{2}\dim_\R X(v_{i_t}).$
\item $L(v_{i_t}) := L(\Delta_{E_{i_t}}(v_{i_t}), \xi_{E_{i_t}, v_{i_t}})$, where 
$E_{i_t}$ denotes the maximal face of $B'$ containing $v_{i_t}$. 
\end{itemize}
Notice that $\dim L(v_{i_t})\leq 2d' -1$ and $d \leq d'$. 

Now, we consider the following long exact sequence of the homology 
for the pair 
$(X', X(v_{i_t}))=(\pi^{-1}(B'), \pi^{-1}(B(v_{i_t}))$:
\begin{equation}\label{eq_les_of_pair}
\xymatrix@R=.5pc
{\quad \cdots\quad  \ar[r]&H_{j+1}(X') \ar[r]& 
H_{j+1}(X', X(v_{i_t})) \ar[r]& \\
H_{j}(X(v_{i_t}))  \ar[r]&
H_{j}(X')\ar[r] &
H_{j}(X', X(v_{i_t})) \ar[r] & \cdots.  }
\end{equation}

Suppose that $j$ is odd. By the induction hypothesis and  
Lemma \ref{lem_cofiber_seq}, the sequence \eqref{eq_les_of_pair} 
becomes 
\begin{equation}\label{eq_when_j=odd}
\xymatrix{0 \ar[r] &H_j(X') \ar[r] & 
\widetilde{H}_{j-1}(L(v_{i_t})) \ar[r]^-0& 
H_{j-1}(X(v_{i_t}))}.
\end{equation}
The map on the most right side is the zero map because 
the domain is a torsion group but the target space is free by 
assumption. 
Hence, $H_j(X')$ is isomorphic to $\widetilde{H}_{j-1}(L(v_{i_t}))$, 
and the latter is zero if $j-1>\dim L(v_{i_t})$ or 
a torsion group determined by the prime factors of 
$|G_{B'}({v_{i_t}})|$ if $j-1 \leq \dim L(v_{i_t})$ 
by Lemma \ref{lem_homology_orb_lens_sp}. This argument 
holds for each free vertex $v_{i_1}, \dots, v_{i_r}$. 
Hence we have $r$ many different exact sequences like 
\eqref{eq_when_j=odd}. 
Now, the assumption of Theorem \ref{thm_gcd_theorem} tells us that 
$${\rm gcd} \big\{ |\widetilde{H}_{j-1}(L(v_{i_1}))|, \dots, |\widetilde{H}_{j-1}
(L(v_{i_r}))| \big\}=1,$$
but $H_j(X')$ stays same. Hence, we conclude that $H_j(X')=0$ if $j$ is odd. 
Moreover, $\widetilde{H}_{j-1}(L(v_{i_t}))=0$ for all $t=1, \dots, r$ because of the 
exactness of \eqref{eq_when_j=odd}. 
	
Next, we assume that $j$ is even. Then, the exact sequence 
\eqref{eq_les_of_pair} 
gives us 
\begin{equation}\label{eq_when_j=even}
\xymatrix{\widetilde{H}_j(L(v_{i_t})) \ar[r]^-0& H_j(X(v_{i_t})) \ar[r]& 
H_j(X') \ar[r]& \widetilde{H}_{j-1}(L(v_{i_t})) \ar[r]& 0}.
\end{equation}
Then, we have  the following three cases:
$$\begin{array}{ll}
\cdots \overset{0}{\longrightarrow} H_j(X(v_{i_t})) 
			\longrightarrow H_j(X') 
			\longrightarrow 0, &  \text{ if } j-1>\dim L(v_{i_t}),\\
\cdots \overset{0}{\longrightarrow} H_j(X(v_{i_t})) 
			\longrightarrow H_j(X') 
			\longrightarrow \Z 
			\longrightarrow 0, & \text{ if } j-1=\dim L(v_{i_t}), \\
\cdots \overset{0}{\longrightarrow} H_j(X(v_{i_t})) 				
\longrightarrow H_j(X') 				
	\longrightarrow G_{j-1} 
	\longrightarrow 0, & \text{ if } j-1<\dim L(v_{i_t}),  
	\end{array}$$
where $G_{j-1}$ is defined in Lemma \ref{lem_homology_orb_lens_sp} and 
$H_j(X(v_{i_t}))$ is free by the induction hypothesis. 
The free vertices $v_{i_1}, \dots, v_{i_r}$  in $B'$ gives us 
$r$ many exact sequences, and each of them is one of the above three cases. 
If one of the free vertices gives the first or the second type of exact sequence, 
then $H_j(X')$ cannot have a torsion subgroup because of the exactness.  
If all of the sequences are of the third type, then $H_j(X')$ has no torsion
because of the assumption of the theorem and and arguments similar to those 
used in the case when $j$ is odd. This completes the induction. 
%
\end{proof}

Notice that Kawasaki in \cite{Ka} has shown that the cohomology ring of 
weighted projective space $\CP^n_{\chi}$ with weight 
$\chi=(\chi_0, \dots, \chi_n)$ is concentrated in even degrees and 
torsion free, if $\text{gcd}(\chi_0, \dots, \chi_n)=1$. Theorem
\ref{thm_gcd_theorem} extends Kawasaki's theorem to the category of 
toric orbifolds which contains the weighted projective spaces. 
The following Example 
\ref{ex_orbifold_over_polygon} shows how we can apply this result to a 
polygon, and Example \ref{ex_CP^4(2,2,2,1,1)} is a practical 
computation on a higher dimensional weighed projective space. 

\begin{example}\label{ex_orbifold_over_polygon}
Consider the 4-dimensional toric orbifold $X$ over $Q$ 
whose $\mathcal{R}$-characteristic pair is described in Figure 
\ref{fig_orbifold_over_polygon}.
Let $H(v)$ be an affine hyperplane defined in \eqref{eq_hyperplane}. 
Then $H(v)\cap Q$
is an $1$-simplex. The induced $\mathcal{L}$-characteristic function 
$$\xi_{Q,v} \colon \{ H(v)\cap F_1, H(v)\cap F_m\} \to \Z^2$$
is defined by 
$\xi_{Q,v} ( H(v)\cap F_1)=\lambda(F_1)=(a_1,b_1)$ and 
$\xi_{Q,v} ( H(v)\cap F_m)=\lambda(F_m)=(a_m,b_m)$. 
Therefore, the orbifold lens space $L(\Delta_Q(v), \xi_{Q,v})$ is homeomorphic 
to $S^3 \big/ G_Q(v)$, where $G_Q(v)$ is a finite abelian group of 
order $|a_1b_m-b_1a_m|$, see Proposition 
\ref{prop_orb_lens_sp=sphere_finite_gp}.
Moreover, the prime factors of the order of a torsion element in 
$H_\ast(L(\Delta_Q(v), \xi_{Q,v}))$ is a subset of the prime factors 
of $|a_1b_m-b_1a_m|$ by Lemma \ref{lem_homology_orb_lens_sp}. 
	
Now, we consider a retraction sequence 
$\{B_k, E_k, b_k\}_{k=1}^\ell$ starting at $v$. The second space 
$B_2$ is the union $F_2 \cup \dots \cup F_{m-1}$ of edges whose 
preimage $\pi^{-1}(B_2)$ is homotopic to the wedge of $m-2$ copies of $\CP^1$. 
Hence,  $H_\ast(\pi^{-1}(B_2))$ is torsion free and $H_{odd}(\pi^{-1}
(B_2))$ vanishes. A cofibration  
$$\xymatrix{ L(\Delta_Q(v), \xi_{Q,v}) \ar[r] & \pi^{-1}(B_2) \ar[r]& X}$$
gives an isomorphism $H_j(X, \pi^{-1}(B_2)) \cong \widetilde{H}_{j-1}
(L(\Delta_Q(v), \xi_{Q,v}) ).$ 
Hence, the long exact sequence of pair $(X, \pi^{-1}(B_2))$ yields
$$\cdots \to \widetilde{H}_j(L(\Delta_Q(v), \xi_{Q,v})) \to
H_j(\pi^{-1}(B_2))\to H_j(X) \to \widetilde{H}_{j-1}(L(\Delta_Q(v), \xi_{Q,v})) \to 
\cdots,$$
and this shows that, if $H_j(X)$ has a torsion part, then its prime factors must 
divide $|a_1b_m-b_1a_m|$.  
But, the same argument can be applied to all the other vertices in $Q$. Finally, 
we may conclude that $H_\ast(X)$ is torsion free and concentrated in 
even degrees, if 
\begin{equation}\label{eq_collection_of_loc_gp_ind_in_polygon}
{\rm gcd}\{ |a_1b_2-b_1a_2|, \dots,  
|a_{m-1}b_m-b_{m-1}a_m|, |a_1b_m-b_1a_m|\}=1,
\end{equation}
which is the assumption of Theorem \ref{thm_gcd_theorem}. 
\end{example}

\begin{figure}
\begin{tikzpicture}[scale=0.3]
\draw[top color=blue!40!white, opacity=0.4, white] 
(-1,1.5)--(-1, 3)--(0,6)--(3,7)--(6,6)--(7,3)--(7,1.5)--(-1,1.5) --cycle;
\draw (-1,1.5)--(-1, 3)--(0,6)--(3,7)--(6,6)--(7,3)--(7,1.5);
\node at (3,3.5) {\small$Q=m\text{-gon}$};
\node[left] at (-1, 4.5/2) {$\vdots$};
\node[rotate=70] at (-1.3, 4.7) {\scriptsize$F_2$};
\node[above, rotate=15] at (3/2, 13/2) {\scriptsize$F_1$}; 
\node[rotate=343] at (4.8, 7.2) {\scriptsize$F_m$};
\node[rotate=288] at (7.2, 4.5) {\scriptsize$F_{m-1}$};
\node[right] at (7, 4.5/2) {$\vdots$};
\node at (20,4) {$\Lambda=\begin{blockarray}{cccc}
	\lambda_1 & \lambda_2 & \cdots & \lambda_4 \\
	\begin{block}{[cccc]}
	a_1&a_2&\cdots & a_m\\ b_1& b_2&\cdots & b_m \\ 				
	\end{block}
	\end{blockarray}$};
	\node[above] at (3,7) {\small$v$};
	\draw[fill] (3,7) circle [radius=0.15]; 	
\end{tikzpicture}
\caption{An $\mathcal{R}$-characteristic function on a polygon.}
\label{fig_orbifold_over_polygon}
\end{figure}
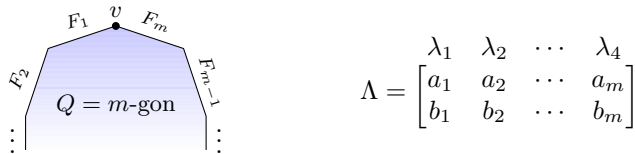

\begin{example}\label{ex_CP^4(2,2,2,1,1)}
We consider an $\mathcal{R}$-characteristic pair $(\Delta^4, \lambda)$, where  
$\lambda\colon \mathscr{F}(\Delta^4) \to \Z^4$ is defined by 
$$ \begin{blockarray}{ccccc}	
\lambda_1 & \lambda_2 & \lambda_3 & \lambda_4  & \lambda_5\\
\begin{block}{[ccccc]}
-1 &1 &0& 0& 0  \\ 
-2& 0& 1& 0& 0  \\ 
-2& 0& 0& 1& 0 \\ 
-2& 0& 0& 0& 1 \\
\end{block}
\end{blockarray}.$$	
The column vectors satisfies the relation  
$\lambda_1 + \lambda_2 + 2 \lambda_3 + 2\lambda_4 + 2\lambda_5=\mathbf{0}.$
Then the resulting toric orbifold is a weighted projective space $\CP^4_{(1,1,2,2,2)}$. 
We refer to \cite[Section 2.2]{Ful} or \cite[Example 3.1.17]{CLS} for more details. 

To check the assumption in Theorem \ref{thm_gcd_theorem}, it suffices to
consider all faces of $\Delta^4$, dimension greater than $1$, 
because the set $\mathfrak{B}(\Delta^4)$ coincides with the set of 
all faces of $\Delta^4$. 
First of all, for $\Delta^4$ itself, it is easy to see that 
$${\rm gcd} \big\{ |G_{\Delta^4}(v)| ~\big|~ v \in V(\Delta^4)\big\}=
{\rm gcd} \{ 2,2,2,1,1\}=1 $$
Since the process is essentially same, we choose 
$E=F_1\cap F_2=\Delta^2$ as a sample. Observe that 
\begin{align*}
\big(\langle \lambda_1,~\lambda_2 \rangle \otimes_\Z \R\big) \cap \Z^4
&=\big(\langle -e_1-2e_2-2e_3-2e_4, e_1 \rangle \otimes_\Z \R\big) 
\cap \Z^4\\
&\cong\langle e_2+e_3+e_4, e_1 \rangle. 
\end{align*}
Hence, we may decompose the target space 
$\Z^4\cong  \langle e_2+e_3+e_4 \rangle \oplus \langle e_1 \rangle 
\oplus \langle e_3 \rangle\oplus \langle e_4\rangle.$
This derives an $\mathcal{R}$-characteristic function 
$$\lambda_E \colon \{ E\cap F_3 , E\cap F_4, E\cap F_5\} \to \Z^2 
\cong \langle e_3 \rangle\oplus \langle e_4\rangle, $$
defined by  $\lambda_E(E\cap F_3)=(-1,-1), ~\lambda_E(E\cap 
F_4)=(1,0)$ and $\lambda_E(E\cap F_5)=(0,1)$. 
Hence, $\pi^{-1}(E)=X(\Delta^2, \lambda_E)\cong \CP^2_{(1,1,1)}$. 
Hence, we have 
$${\rm gcd}\big\{   |G_E(v)|  ~\big|~  v\in V(E) \big\} 
={\rm gcd} \{ 1,1,1\}=1.$$ 
\end{example}

Sometimes, if the polytope has sufficiently many symmetries, 
we can analyze all possible retraction sequences 
efficiently. Proposition \ref{prop_local_group_subgroup}
can then be used to ensure  the gcd assumption of Theorem 
\ref{thm_gcd_theorem} holds. 
The main features of the 
following example are that the polytope has at least $2$ free vertices at each 
$B\in \mathfrak{B}(Q)$, and that the collection $\{|G_Q(v)| \mid v\in V(Q)\} $ 
consists of mutually different prime numbers, in particular, they are pairwise relatively prime.

\begin{example}
Let $Q$ be the $3$-dimensional cube whose
facets and vertices are illustrated in Figure \ref{fig_prism_char_ftn}. 
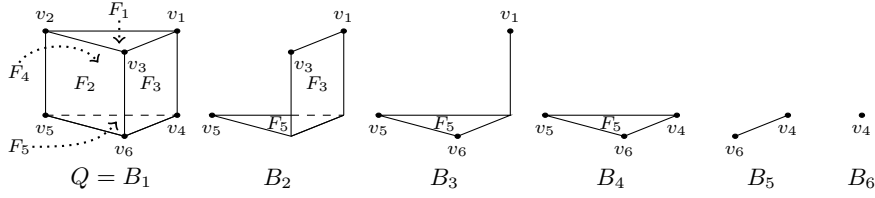
\begin{figure}
\begin{tikzpicture}[scale=0.35, yscale=0.8]
	\draw (0,0)--(3,-1)--(5,0);
        	\draw (5,0)--(5,4)--(0,4)--(0,0);
	\draw (0,4)--(3,3)--(3,-1)--(0,0);
	\draw (5,4)--(3,3);
	\draw (5,0)--(3,-1);
	\draw[dashed] (0,0)--(5,0);
	\node[above] at (0,4) {\scriptsize$v_2$};
	\draw[fill] (0,4) circle [radius=0.1];

	\node[above] at (5,4) {\scriptsize$v_1$};
	\draw[fill] (5,4) circle [radius=0.1];

	\node[below] at (3.5,3) {\scriptsize$v_3$};
	\draw[fill] (3,3) circle [radius=0.1];

	\node[below] at (0,0) {\scriptsize$v_5$};
	\draw[fill] (0,0) circle [radius=0.1];

	\node[below] at (5,0) {\scriptsize$v_4$};
	\draw[fill] (5,0) circle [radius=0.1];

	\node[below] at (3,-1) {\scriptsize$v_6$};
	\draw[fill] (3,-1) circle [radius=0.1];
	
	\node at (1.5,1.5) {\scriptsize$F_2$};
	\node at (4,1.5) {\scriptsize$F_3$};
	\node at (-1, 2) {\scriptsize$F_4$};
	\draw[thick, dotted, ->] (-1, 2.5) to [out=60, in=120] (2, 2.5);
	\node at (2.8, 5) {\scriptsize$F_1$};
	\draw[thick, dotted, ->] (2.8, 4.5)--(2.8, 3.4);
	\node at (-1, -1.5) {\scriptsize$F_5$};
	\draw[thick, dotted, ->] (-0.5, -1.5) to [out=0, in=240] (2.7, -0.3);
	\node at (2.5, -3) {\small$Q=B_1$};

	\begin{scope}[xshift=180]
	\draw (5,0)--(5,4)--(3,3)--(3,-1); 
	\draw (5,0)--(3,-1)--(0,0)--(3,0);
	\draw (5,0)--(3,-1);
	\draw[dashed] (3,0)--(5,0);
	\node at (4,1.5) {\scriptsize$F_3$};
	\node at (2.5,-0.4) {\scriptsize$F_5$};

	\node[above] at (5,4) {\scriptsize$v_1$};
	\draw[fill] (5,4) circle [radius=0.1];

	\node[below] at (3.5,3) {\scriptsize$v_3$};
	\draw[fill] (3,3) circle [radius=0.1];

	\node[below] at (0,0) {\scriptsize$v_5$};
	\draw[fill] (0,0) circle [radius=0.1];
	\node at (2.5, -3) {\small$B_2$};

	\begin{scope}[xshift=180]
	\draw (5,0)--(5,4);
	\draw (5,0)--(3,-1)--(0,0)--cycle;
	\node at (2.5,-0.4) {\scriptsize$F_5$};

	\node[above] at (5,4) {\scriptsize$v_1$};
	\draw[fill] (5,4) circle [radius=0.1];

	\node[below] at (0,0) {\scriptsize$v_5$};
	\draw[fill] (0,0) circle [radius=0.1];
	
	\node[below] at (3,-1) {\scriptsize$v_6$};
	\draw[fill] (3,-1) circle [radius=0.1];
	\node at (2.5, -3) {\small$B_3$};

\begin{scope}[xshift=180]
	\draw (5,0)--(3,-1)--(0,0)--cycle;
	\node at (2.5,-0.4) {\scriptsize$F_5$};

	\node[below] at (0,0) {\scriptsize$v_5$};
	\draw[fill] (0,0) circle [radius=0.1];
	
	\node[below] at (3,-1) {\scriptsize$v_6$};
	\draw[fill] (3,-1) circle [radius=0.1];
	
	\node[below] at (5,0) {\scriptsize$v_4$};
	\draw[fill] (5,0) circle [radius=0.1];
	\node at (2.5, -3) {\small$B_4$};
	
	\begin{scope}[xshift=120]
	\draw (5,0)--(3,-1);

	\node[below] at (3,-1) {\scriptsize$v_6$};
	\draw[fill] (3,-1) circle [radius=0.1];
	
	\node[below] at (5,0) {\scriptsize$v_4$};
	\draw[fill] (5,0) circle [radius=0.1];
	\node at (4, -3) {\small$B_5$};
\begin{scope}[xshift=80]
	\draw (5,0);
	\node[below] at (5,0) {\scriptsize$v_4$};
	\draw[fill] (5,0) circle [radius=0.1];
	\node at (5, -3) {\small$B_6$};

\end{scope}
\end{scope}	
\end{scope}
\end{scope}
\end{scope}
\end{tikzpicture}
\caption{A retraction sequence of a prism}
\label{fig_prism_char_ftn}
\end{figure}
We assign an $\mathcal{R}$-characteristic function $\lambda\colon \mathscr{F}(Q)\to \Z^3$ as follows;
\begin{align*}
\lambda(F_1) = (p_1, p_2, p_3),~\lambda(F_5)= (p_4, p_5, p_6),~
\lambda(F_2) = \mathbf{e}_1,~ \lambda(F_3) = \mathbf{e}_2,~\lambda(F_4) = \mathbf{e}_3,
\end{align*}
where $p_i$'s are all prime numbers with $p_i \neq p_j$ whenever $i\neq j$, and
$\mathbf{e}_i$ is the $i$-th standard unit vector in $\Z^3$.  
Then, it is easy to see that 
$|G_Q(v_i)|= p_i,$ for $i=1, \dots, 6.$
Hence, we have 
$$\gcd \big\{ |G_Q(v)| ~\big| ~ v\in V(Q) \big\}=\gcd \{p_1, \dots, p_6\} =1.$$ 
The same property holds for other polytopal complex $B\in \mathfrak{B}(Q)$
from Proposition \ref{prop_local_group_subgroup}. 
Indeed, for instance, 
$$\gcd  \big\{ |G_{B_2}(v)| ~\big| ~ v\in FV(B_2) \big\}
=\gcd \big\{ |G_{B_2}(v_1)|,~ |G_{B_2}(v_3)|,~ |G_{B_2}(v_5)| \big\}=1$$
because $\gcd\{p_1, p_3, p_5\}=1$. 
\end{example}

\section{Cohomology ring of toric orbifolds}
\label{sec_cohom_ring_and_ppoly}

The integral equivariant cohomology ring of certain 
projective toric varieties is given by a ring determined 
by the fan data. This ring is called the ring of 
\emph{piecewise polynomials} which we denote by 
$\mathcal{PP}[\Sigma]$. For a smooth fan, it uses the 
fan's combinatorial data only and coincides with the 
\emph{Stanley--Reisner ring} $\mathcal{SR}[\Sigma]$
of the fan $\Sigma$. 	In general however, the ring
of piecewise polynomials uses all the geometric data 
in a fan. 

To be more precise, let $\Sigma$ be a fan in $\R^n$ and 
$\{\lambda_1, \dots, \lambda_m\}\subset \Z^n$  the 
set of primitive vectors generating $1$-dimensional rays in $\Sigma$. Then, 
the Stanley--Reisner ring  $\mathcal{SR}[\Sigma]$ is 
defined by the quotient $\Z[x_1, \dots, x_m]/\mathcal{I}$
of polynomial ring with $m$-variables by the following 
ideal generated by square free monomials:
\begin{equation}\label{eq_SR-ideal}
\mathcal{I}=\left< x_{i_1}\cdots x_{i_k} \mid 
cone\{\lambda_{i_1}, \dots, \lambda_{i_k} \} \notin
\Sigma \right>
\end{equation}
where $cone\{\lambda_{i_1}, \dots, \lambda_{i_k} \}$ 
denotes the cone generated by $\{\lambda_{i_1},\dots, \lambda_{i_k}\}$. 
For the case of smooth toric varieties, their odd degree 
cohomology always vanishes, which leads us 
the following description of cohomology ring.
\begin{theorem}[\cite{Dan}, \cite{Jur}]
Let $X_\Sigma$ be a smooth toric variety. Then, 
there exists a ring isomorphism 
$H^\ast(X_\Sigma)\cong 
\mathcal{SR}[\Sigma]/\mathcal{J}$, 
where $\mathcal{J}$ is the ideal generated by the 
linear relations
\begin{equation}\label{eq_linear_relations}
\sum_{i=1}^m \langle \lambda_i, \mathbf{e}_j\rangle x_i=0, 
\quad  j=1, \dots, n,
\end{equation}
where $\mathbf{e}_j$ denotes the $j$-th standard unit vector in $\Z^n$. 
\end{theorem}
Notice that, for toric orbifolds, the theorem holds only 
for $\Q$-coefficients; see for instance, 
\cite[Section 12.4]{CLS}. 
In order to make the singular 
theory better resemble the smooth case, we introduce 
an intermediate ring, which models the Stanley--Reisner 
ring but is based on a fan $\widehat{\Sigma}$ in $\R^m$ 
defined from the combinatorial data of $\Sigma$, 
which has $m$ one-dimensional rays. The ring of 
piecewise polynomials on the original fan $\Sigma$ is 
recovered by imposing an \emph{integrality condition}, 
which leads us the notion of the 
\emph{weighted Stanley--Reisner ring} 
$w\mathcal{SR}[\Sigma]$ of $\Sigma$. 
		
\subsection{Weighted Stanley--Reisner ring}
Let $\Sigma$ be a simplicial fan in $\R^n$, i.e.,  
each top dimensional cone of $\Sigma$ is generated by 
$n$ linearly independent primitive vectors  in the lattice $\Z^n$.
In particular, a simplicial fan $\Sigma$ is called a \emph{polytopal fan}
if it is the normal fan of a simple lattice polytope in $\R^n$;
see \cite[Chapter 2]{CLS} or 
\cite[Section 1.5]{Ful} for more details. 
Hence, the determinant of generators of each top 
dimensional cone is nonzero but not necessarily be $\pm 1$, 
and so the corresponding fixed point might be 
singular.  Let $\Sigma^{(j)}$ denotes the set of $j$-dimensional cones in $\Sigma$. 
To record the singularity of each fixed point in an 
efficient way, we assign a vector 
$$z^\sigma:= (z^\sigma_1, \dots, z^\sigma_m) \in
 \bigoplus_m \Q[u_1, \dots, u_n]$$
to each top dimensional cone 
$\sigma=cone\{\lambda_{i_1}, \dots, \lambda_{i_n}\}
\in \Sigma^{(n)}$ 
by the following rule:
\begin{enumerate}
\item[(C1)] $z^\sigma_j=0$ if $j\notin \{i_1, \dots, i_n\}$,
\item[(C2)] 
$\begin{bmatrix} 
z^\sigma_{i_1} \\ \vdots \\ z^\sigma_{i_n}
\end{bmatrix}=
\left[ \begin{array}{c|c|c} &&\\ \lambda_{i_1} &\cdots &  
\lambda_{i_n}\\&& \end{array} \right]^{-1}
\cdot \begin{bmatrix} u_1\\ \vdots \\ u_n \end{bmatrix}$.
\end{enumerate}

The inverse matrix in the condition (C2) may have 
rational entries. The following 
definition is motivated by this observation. 
\begin{definition}\label{def_int_cond}
Given a fan $\Sigma$ in $\R^n$ with 
$m$ one-dimensional rays, we say a polynomial 
$h(x_1, \dots, x_m) \in \Z[x_1, \dots,  x_m]$ satisfies the 
\emph{integrality condition} with respect to $\Sigma$, if 
$h(z^\sigma)\in \Z[x_1, \dots, x_m]$ for all 
$\sigma\in \Sigma^{(n)}$. 
\end{definition}

Notice that the collection of polynomials satisfying 
the integrality condition is closed under 
the addition and multiplication, which induces the 
natural ring structure on it inherited from that of $\Z[x_1, \dots, x_m]$. 
Moreover, the polynomials in 
$\mathcal{I}$ defined in \eqref{eq_SR-ideal} satisfy 
the integrality condition because of the condition (C1). 
Finally, we define the weighted Stanley--Reisner ring 
$w\mathcal{SR}[\Sigma]$ as follows:
\begin{equation}\label{def_wSR}
w\mathcal{SR}[\Sigma]:=\{h \in \Z[x_1, \dots, x_m] 
\mid h~ \text{satisfies the \emph{integrality condition} }\} / \mathcal{I}.	
\end{equation}

\begin{remark}
When the fan $\Sigma$ is smooth, 
$w\mathcal{SR}[\Sigma]=\SR[\Sigma]$. Indeed, 
the determinant of a smooth top dimensional cone is 
$\pm 1$, which implies that its inverse has integer 
entries.
\end{remark}

%

Now, we introduce the second main theorem of this paper. 
The proof will be given in the next subsection. 
\begin{theorem}\label{thm_cohomology_ring}
Let $X_\Sigma$ be a projective toric orbifold over 
a polytopal fan $\Sigma$
with $H^{odd}(X)=0$. Then, there is a ring isomorphism 
$$H^\ast(X_\Sigma)\cong w\SR[\Sigma]/\mathcal{J},$$ 
where $\mathcal{J}$ is the ideal generated by linear 
relations \eqref{eq_linear_relations}. 
\end{theorem}

Consider a simple lattice polytope $Q$ in $\R^n$ whose 
normal fan is $\Sigma$.
Then, the normal vectors of each facet define an 
$\mathcal{R}$-characteristic function 
$\lambda \colon \mathscr{F}(Q) \to \Z^n$. 
Now, we have a natural $\mathcal{R}$-characteristic pair 
$(Q, \lambda)$ from $\Sigma$, which allows us to apply 
the results of Section \ref{sec_ret_of_simple_poly} 
and  Section \ref{sec_van_odd_homology}. 
Hence, we have a concrete statement which is Theorem 
\ref{thm_cohom_ring_without_Hodd=0}
with a sufficient condition for $H^{odd}(X_\Sigma)=0$.

We complete this subsection by applying Theorem 
\ref{thm_cohom_ring_without_Hodd=0} to a weighted
projective space $\CP^2_{(1,a,b)}$. We shall recover
the Kawasaki's result \cite[Theorem 1]{Ka}. 

\begin{example}\label{ex_wSR_for_CP^2(1,a,b)}
Let $\Sigma$ be a fan in $\R^2$ generated by 
\begin{equation}\label{eq_prim_vec_of_2-dim_wps}
\lambda_1=(a,b), ~\lambda_2=(-1,0), ~
\lambda_3=(0,-1) \in \Z^2 
\end{equation}
where $a$ and $b$ are relatively prime. 
The 2-dimensional cones are
$\sigma_{12}, \sigma_{13}, \sigma_{23}$, 
where $\sigma_{ij}=cone\{\lambda_i,\lambda_j\}$.
Since $\{\lambda_1, \lambda_2, \lambda_3\}$ 
generates the lattice $\Z^2$ 
and satisfies 
$\lambda_1 +a\lambda_2+b\lambda_3=(0,0)$, 
the toric variety $X_\Sigma$ is isomorphic to the 
weighed projective space $\CP^2_{(1,a,b)}$. 
We refer to \cite[Section 2.2]{Ful} or  \cite[Example 3.1.17]{CLS} for 
the characterization of a fan corresponding to 
weighted projective spaces. 

The direct computation of inverse matrices for 
$\left[ \begin{array}{c|c} \lambda_i& \lambda_j \end{array}\right]$
gives us the following list of vectors:
\begin{displaymath}
\begin{array}{rrrrrl}\vspace{0.1cm}
z^{\sigma_{12}}=&\big(& \frac{1}{b}u_2,&-u_1 +\frac{a}{b}u_2,&  0 &\big),\\ \vspace{0.1cm}
z^{\sigma_{13}}=&\big(&\frac{1}{a}u_1,& 0,&\frac{b}{a}u_1-u_2 &\big),\\ 
z^{\sigma_{23}}=&\big(&0, &-u_1, &-u_2 &\big).
\end{array}
\end{displaymath}
%
Hence, we have 
\begin{align}\label{eq_SR_for_CP_235_with_int_cond}
w\mathcal{SR}[\Sigma]=\left\{ h(x_1,x_2,x_3)\in \Z[x_1,x_2,x_3] \mid 
h(z^{\sigma_{ij}})\in \Z[u_1, u_2],~ \text{for}~  1\leq i<j\leq 3
\right\}
/\mathcal{I}.
\end{align}
Finding elements at each degree is straightforward. For instance, 
a degree 2 polynomial $k_1x_1+k_2x_2+ k_3x_3 \in w\mathcal{SR}[\Sigma]$
if and only if the following three polynomials have integer coefficients:
\begin{enumerate}
\item $-k_2u_1+\left(\frac{1}{b}k_1+\frac{a}{b}k_2\right) u_2,$
\item $\left( \frac{1}{a}k_1 + \frac{b}{a}k_3\right)u_1 -k_3u_2,$
\item $-k_2u_1-k_2u_2$,
\end{enumerate}
which is exactly the case when 
$k_1+ak_2 \in b\Z$ and $k_1+bk_3\in a\Z.$
Hence, one can show that the integers $(k_1, k_2, k_3)$ are 
$$(a,-1,0),~(b,0,-1),~ (ab,0,0),~(0,b,0),~(0,0,a),$$
and $\Z$-linear combinations of them. 
They  give us the following degree $2$ elements in $w\mathcal{SR}[\Sigma]$,
\begin{equation}\label{eq_deg_2_gen_in_wSR}
ax_1-x_2,~bx_1-x_3,~ abx_1,~bx_2, ~ax_3,
\end{equation}
and $\Z$-linear combinations of them. 
Similarly, we can find the degree $4$ elements by 
\begin{equation}\label{eq_deg_4_gen_in_wPS}
a^2b^2x_1^2,~b^2x_2^2,~ a^2x_3^2,~ abx_1x_2,~ a^2x_1x_3,~ x_2x_3,
\end{equation}
and $\Z$-linear combinations of them. 

We continue to calculate the ring structure of $H^\ast(\CP^2_{(1,a,b)})$
by Theorem \ref{thm_cohom_ring_without_Hodd=0}. Indeed, 
the $\mathcal{R}$-characteristic pair $(\Delta^2, \lambda)$ induced from 
$\Sigma$ satisfies the assumption of Theorem \ref{thm_gcd_theorem}, 
see Example \ref{ex_orbifold_over_polygon}. 
Hence, we conclude that $\CP^2_{(1,a,b)}$ is \emph{even}, which implies 
that the rank of integral cohomology group is $1$ in each even degree and 
$0$ otherwise. 

\begin{remark}\label{rmk_betti_num=h-vector}
In general, the integral betti numbers of a toric manifold  or 
the rational betti numbers of a toric orbifold are given by the $h$-vector 
of underlying polytope, see \cite[Section 3]{DJ} or \cite[Section 4]{PS}. 
Hence, if a toric orbifold is even, then its integral betti numbers are 
obtained by the $h$-vector of the underlying polytope. 
\end{remark}

Now, the characteristic vectors \eqref{eq_prim_vec_of_2-dim_wps} 
and the relation \eqref{eq_linear_relations} determine
the ideal $\mathcal{J}=\langle ax_1-x_2,~bx_1-x_3\rangle$ 
whose generators are first two items in \eqref{eq_deg_2_gen_in_wSR}. 
Hence, the elements in \eqref{eq_deg_2_gen_in_wSR} except first two are all eventually equal 
by $\mathcal{J}$ in $H^\ast(\CP^2_{(1,a,b)})$. We put 
$$w_1:=abx_1=bx_2=ax_3. $$
Since rank$H^4(\CP^2_{(1,a,b)})=1$, we choose an element in 
\eqref{eq_deg_4_gen_in_wPS} which has the minimal divisibility. 
In this case, we pick up 
$$w_2:=x_2x_3.$$ 
Then, we have the multiplicative structure $w_1^2=abw_2$. 
Finally, we have the following presentation
$$H^\ast(\CP^2_{(1,a,b)})\cong \Z[w_1, w_2]/\langle w_1^2-abw_2, w_1w_2\rangle,$$
where $\deg w_1=2,~\deg w_2=4$. Notice that the monomial $w_1w_2$ comes from 
the Stanley--Reisner ideal $x_1x_2x_3$.  	
\end{example}

\begin{remark}
Even if we can find elements in $w\SR[\Sigma]$ by the direct computation 
of integrality condition, finding the minimal set of generators
in $w\mathcal{SR}[\Sigma]$ for arbitrary simplicial fan is not obvious, in general. 
However, 
when $X_\Sigma$ is a weighted projective space, 
a result of \cite{BFR} allows us to find generators
of the ring of piecewise polynomials $\mathcal{PP}[\Sigma]$ 
and hence, generators in $w\SR[\Sigma]$, by a method in the 
next subsection. 
Moreover, the identification result Corollary \ref{cor_PP=wSR} 
tells us how to interprete those generators in terms of 
elements in $w\SR[\Sigma]$. 
\end{remark}


\subsection{Piecewise algebra and cohomology ring}
\label{subsec_p.algebra}
We introduce now the ring of piecewise polynomials 
which is determined by a fan and describes the 
equivariant cohomology of a large class of toric orbifolds. 
As mentioned above, unlike the Stanley--Reisner ring, 
which encodes combinatorial data only, the 
ring of piecewise polynomials depends the full 
geometric information in a fan. 

We begin by introducing piecewise polynomials.
Let $\Sigma$ be a fan in  $\R^n$.  A function 
$f \colon \Z^n \to \Z$  is called a 
\emph{piecewise polynomial} on $\Sigma$  
if for each cone $\sigma\in \Sigma$ the restriction 
$f|_\sigma$ is a polynomial function on $\sigma\cap \Z^n$. 
Such function can be interpreted as a collection 
$\{f_\sigma\}_{\sigma\in \Sigma^{(n)}}$, which we 
denote by $\{f_\sigma\}$ for simplicity,  such that
\begin{equation}\label{eq_property_of_p_poly}
f_\sigma |_{_{\sigma\cap \sigma'}} = 
f_{\sigma'} |_{_{\sigma\cap \sigma'}}.
\end{equation}
In other words, it is enough to consider the polynomials 
on each top dimensional cone. The polynomials on 
lower dimensional cones are determined by 
\eqref{eq_property_of_p_poly}. 	

The set $\PP[\Sigma]$ of piecewise polynomial functions on $\Sigma$
with integer coefficients on $\Sigma$ has a ring structure 
under the pointwise addition and multiplication. 
Moreover, the natural inclusion of global polynomials 
$\Z[u_1, \dots, u_n]$ into $\PP[\Sigma]$ 
induces a $\Z[u_1, \dots, u_n]$-algebra structure on 
$\PP[\Sigma]$. Furthermore, by considering $\Q^n$ 
instead of $\Z^n$, we can define piecewise polynomial 
functions with rational coefficients $f \colon \Q^n \to \Q$, 
and we denote the ring of piecewise polynomial functions 
with rational coefficients by $\PP[\Sigma;\Q]$.  
	
It is well-known that the equivariant cohomology ring with 
rational coefficients of  a toric variety  over a simplicial fan is isomorphic 
to $\PP[\Sigma;\Q]$, see \cite{CLS}. On the other hand, for the 
case of polytopal fans,
Bahri, Franz and Ray \cite{BFR}
proved the following proposition over $\Z$. 
\begin{proposition} \label{prop.H_T=PP_by_BRF} 
\cite[Proposition 2.2]{BFR}. Let $\Sigma$ be a polytopal 
fan in $\R^n$, $X_\Sigma$  the associated compact 
projective toric variety with $H^{odd}(X_\Sigma)=0$, 
and $T=T^n$ the $n$-dimensional torus acting on $X_\Sigma$.  
Then, $H^\ast_T(X_\Sigma)$ is isomorphic to 
$\PP[\Sigma]$ as an $H^\ast(BT)$-algebra.
\end{proposition}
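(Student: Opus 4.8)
The plan is to compute $H^\ast_T(X_\Sigma)$ from the standard $T$-invariant affine cover $X_\Sigma=\bigcup_{\sigma\in\Sigma^{(n)}}U_\sigma$ and to recognise the outcome as the equalizer that defines $PP[\Sigma]$. First I would record the local computations. Each affine chart $U_\sigma$ attached to a maximal cone $\sigma$ admits a $T$-equivariant deformation retraction onto its unique fixed point (flow along a one-parameter subgroup in the interior of $\sigma$), so $H^\ast_T(U_\sigma)\cong H^\ast(BT)\cong\Z[u_1,\dots,u_n]$, which I read as the polynomial functions on $N_\R$ restricted to $\sigma$. An intersection $U_\sigma\cap U_{\sigma'}=U_{\sigma\cap\sigma'}$ retracts onto the distinguished orbit of the face $\sigma\cap\sigma'$, and its equivariant cohomology is again even and torsion free, identified with the polynomial functions restricted to $\sigma\cap\sigma'$. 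These identifications are routine.

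The structural input is the hypothesis $H^{odd}(X_\Sigma)=0$. It makes $X_\Sigma$ equivariantly formal, so that the Serre spectral sequence of the Borel fibration $X_\Sigma\hookrightarrow ET\times_T X_\Sigma\to BT$ collapses and $H^\ast_T(X_\Sigma)$ is a free, even $H^\ast(BT)$-module. The goal is then to prove that the \v{C}ech complex of the cover, whose terms are the even rings computed above, is exact in positive \v{C}ech degree, so that
\[
H^\ast_T(X_\Sigma)\;\cong\;\ker\!\Big(\textstyle\prod_{\sigma}H^\ast_T(U_\sigma)\;\rightrightarrows\;\prod_{\sigma,\sigma'}H^\ast_T(U_{\sigma\cap\sigma'})\Big).
\]
Under the identifications of the previous paragraph, a tuple $(f_\sigma)_\sigma$ lies in this kernel exactly when $f_\sigma$ and $f_{\sigma'}$ agree on $\sigma\cap\sigma'$ for every pair of maximal cones, which is precisely the compatibility \eqref{eq_property_of_p_poly} defining a piecewise polynomial. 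Hence the equalizer is $PP[\Sigma]$. As every map in sight is a restriction and the $H^\ast(BT)$-module structures on both sides are induced by the global polynomials, the resulting bijection is an isomorphism of $H^\ast(BT)$-algebras, as required.

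The step I expect to be the main obstacle is exactly the integral exactness of that \v{C}ech complex, equivalently the degeneration at the equalizer of the associated Mayer--Vietoris spectral sequence. Rationally the proposition is classical and holds for every complete fan, because formality over $\Q$ is automatic and the higher \v{C}ech cohomology of the piecewise-polynomial presheaf vanishes; the real content is the integral refinement. A parity count alone does not suffice, since the nonzero $E_1$-terms all sit in even internal degree and still permit higher differentials and higher even \v{C}ech-degree terms. The point is to leverage equivariant formality---the freeness and evenness of $H^\ast_T(X_\Sigma)$ over $H^\ast(BT)$---to show that these potential contributions vanish over $\Z$, so that no torsion or higher-cohomology obstruction survives and the rational equalizer description is genuinely upgraded to an integral one. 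This is where I expect the essential work to lie.
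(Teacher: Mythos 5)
Note first that the paper does not prove this statement: it is imported verbatim from \cite[Proposition 2.2]{BFR}, so the only proof available for comparison is the one in that reference, which does not run through the \v{C}ech complex of the affine cover but exploits equivariant formality via localization to the fixed-point set together with the integral exact sequences of Franz and Puppe. Your local computations are correct and standard: each $U_\tau$ retracts $T$-equivariantly onto its distinguished orbit, $H^\ast_T(U_\tau)$ is the polynomial ring on the span of $\tau$ (with respect to the saturated sublattice $N\cap\operatorname{span}\tau$), the restriction maps are restrictions of polynomial functions, and the equalizer of the first two \v{C}ech terms is exactly $PP[\Sigma]$. So the reduction is set up correctly.

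The difficulty is that the proposal stops precisely where the proposition begins. You acknowledge that everything hinges on the integral vanishing of $E_2^{p,\ast}$ for $p>0$ in the Mayer--Vietoris spectral sequence of the cover, and you offer no mechanism for it beyond the hope that equivariant formality can be ``leveraged.'' What formality actually gives you is that the abutment is even and free over $H^\ast(BT)$; combined with the evenness of the $E_1$-page this kills $E_\infty^{p,q}$ for $p$ odd, but it says nothing about even $p>0$, nor does it rule out the odd differentials $d_r$, $r\geq 3$, leaving the $0$-column. Without $E_2^{p,\ast}=0$ for $p>0$ one only obtains a surjection $H^\ast_T(X_\Sigma)\twoheadrightarrow E_\infty^{0,\ast}$ together with an inclusion $E_\infty^{0,\ast}\subseteq PP[\Sigma]$, which is strictly weaker than the claimed isomorphism. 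The integral acyclicity of this \v{C}ech complex is essentially equivalent in content to the proposition itself, so deferring it means the proof is not there. A smaller but real inaccuracy: the aside that the rational statement is classical ``for every complete fan, because formality over $\Q$ is automatic'' is wrong --- arbitrary complete fans can have nonzero odd rational cohomology, and the rational identification of $H^\ast_T$ with piecewise polynomials already uses equivariant formality. As written, this is a plausible strategy outline with the essential step missing, not a proof.
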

Here, $H^\ast(BT)$-algebra structure on 
$\PP[\Sigma]$ is obtained by identifying $H^\ast(BT)$ 
with the global polynomials $\Z[u_1, \dots, u_n]$, where 
$u_i$ is the first Chern class of the canonical line 
bundle given by $i$-th projection $T\to S^1$. 
	 
On the other hand, the combinatorial structure of 
$\Sigma$ determines a canonical fan in a higher 
dimensional lattice as follows: 
Let $\Sigma^{(1)}=\{ \lambda_1, \dots, \lambda_m\} $ 
be the set of primitive vectors generating 
$1$-dimensional rays in $\Sigma$. We define  
a linear map  $\Lambda \colon  \Z^m \to \Z^n$ by 
$\Lambda(\mathbf{e}_i)=\lambda_i$, where 
$\mathbf{e}_1, \dots, \mathbf{e}_m$ 
denote the standard unit vectors in $\Z^m$. 
By the pull-back of $\Sigma$ through $\Lambda$, 
we can define a fan
$$\Sigmahat =\{ \hat{\sigma}:=\Lambda^{-1}(\sigma) 
\mid \sigma \in \Sigma\}$$
in $\R^m$. To be more precise, if $\sigma$ is the 
cone generated by $\lambda_{i_1}, \dots, \lambda_{i_k}$, 
then $\hat\sigma$ is the cone generated 
by $\mathbf{e}_{i_1}, \dots, \mathbf{e}_{i_k}.$
Moreover, for a commutative ring $\mathbf{k}$, 
a linear map $\Lambda$ induces a ring homomorphism
\begin{equation}\label{eq_pullback_of_Lambda}
\Lambda^\ast : \mathcal{PP}[\Sigma; \mathbf{k}] \to 
\mathcal{PP}[\Sigmahat;\mathbf{k}] 
\end{equation}
of piecewise polynomial rings, where the map is defined by 
\begin{equation*}\label{eq_Lambda*}
\Lambda^\ast(\{f_\sigma\})=\big\{
g_{\sigmahat} (x_{i_1}, \dots, x_{i_n}) := 
f_\sigma (\Lambda_\sigma \cdot  
[x_{i_1}, \dots, x_{i_n}]^T )\big\}_{\sigmahat \in \Sigmahat^{(n)}}
\end{equation*}
where $\Lambda_\sigma 
=\left[ \begin{array}{c|c|c} \lambda_{i_1}&\dots&
 \lambda_{i_n} \end{array}\right]$ is a square matrix and

  Indeed, the map $\Lambda^\ast$ is well-defined, since 
 $g_{\sigmahat}|_{_{\sigmahat \cap \sigmahat'}} = g_{\sigmahat'}|_{_{\sigmahat \cap \sigmahat'}}$.

\begin{lemma}\label{lem_relation_PPhat_PP}
Given a polytopal fan  $\Sigma$, 
as $H^\ast(BT;\mathbf{k})$-algebra:
\begin{enumerate}
\item When $\mathbf{k}=\Q$,  $\PP[\Sigma;\Q]$ 
is isomorphic to 
$\PP[\Sigmahat;\Q]$. 
\item When $\mathbf{k}=\Z$, there is a 
monomorphism from $\PP[\Sigma]$
to $\PP[\Sigmahat]$. 
\end{enumerate}
\end{lemma}
\begin{proof}
For each top dimensional cone 
$\sigma=cone\{\lambda_{i_1}, \ldots,
 \lambda_{i_n}\} \in \Sigma^{(n)}$, 
we set the following notation:
\begin{itemize}
\item $f_\sigma(u_1, \dots, u_n)$, 
$g_{\hat{\sigma}}(x_{i_1}, \dots, x_{i_n})$ 
: polynomial functions defined on $\sigma\in \Sigma$ 
and $\hat{\sigma}\in \Sigmahat$, respectively.
\item $\{f_\sigma\} :=\{f_\sigma(u_1, \dots, u_n) 
\mid \sigma \in \Sigma^{(n)}\} \in \PP[\Sigma]$.
\item $\{g_{\sigmahat}\} :=
\{g_{\sigmahat}(x_{i_1}, \dots, x_{i_n}) \mid 
\hat{\sigma} \in \Sigmahat^{(n)}\} \in \PP[\Sigmahat]$. 
\item $\Lambda_\sigma:=\left[\lambda_{i_1} 
\mid \cdots \mid \lambda_{i_n} \right]$ : 
$n \times n$ matrix with column vectors 
$\lambda_{i_1},\ldots,\lambda_{i_n}$.
\end{itemize}
Recall the ring homomorphism $\Lambda^\ast$ 
introduced  in \eqref{eq_pullback_of_Lambda}. 
If we restrict $\mathbf{k}$ to $\Q$, the map 
$\Lambda^\ast$ has the natural inverse
\begin{equation}\label{eq_Theta} 
\Theta \colon \PP[\widehat{\Sigma};\mathbf{k}] \to  
\PP[\Sigma;\mathbf{k}]
\end{equation} 
defined  by 
$$\Theta(\{g_{\sigmahat}\})=\{f_\sigma(u_1, \dots, u_n) :=
g_{\sigmahat} (\Lambda_\sigma^{-1} \cdot [u_1, \ldots, u_n]^T
\mid \sigma \in \Sigma^{(n)}\},$$
where $\Lambda_\sigma^{-1}$ is regarded as a linear automorphism of $\Q^n$.  
Indeed, 
\begin{align*}
            (\Theta\circ \Lambda^\ast)(\{f_\sigma\})&=
            \{ f_\sigma (\Lambda_\sigma \cdot   \Lambda_\sigma^{-1} \cdot [u_1,\ldots,u_n]^T)
             \mid \sigma \in \Sigma^{(n)} \}\\
             &=\{f_\sigma(u_1,\ldots, u_n) \mid \sigma \in \Sigma^{(n)}\}=\{f_\sigma\}.
        \end{align*}
	In particular, $\Lambda^\ast$ is a monomorphism in $\Z$-coefficients.  
	Finally, the $H^\ast(BT;\mathbf{k})$-algebra structure on $\PP[\Sigmahat ;\mathbf{k}]$ 
	is naturally inherited from that of $\PP[\Sigma;\mathbf{k}]$ 
	via the map $\Lambda^\ast$.
	\end{proof}
Recall that the Stanley--Reisner ring $\mathcal{SR}[\Sigma;\mathbf{k}]$ has combinatorial data only, 
while $\PP[\Sigma;\mathbf{k}]$ contains both combinatorial and geometric data. 
However, $\PP[\Sigmahat;\mathbf{k}]$ has only combinatorics, but looks 
like $\PP[\Sigma;\mathbf{k}]$. In this point of view, $\PP[\Sigmahat;\mathbf{k}]$ is an intermediate 
object between $\mathcal{SR}[\Sigma;\mathbf{k}]$ and $\PP[\Sigma;\mathbf{k}]$.  
The following lemma together with Lemma \ref{lem_relation_PPhat_PP} concludes 
the relations among those three objects. 
\begin{lemma}\label{lem_SR=PPhat}
As $H^\ast(BT;\mathbf{k})$-algebra, $\PP[\Sigmahat;\mathbf{k}]$ is isomorphic to $\mathcal{SR}[\Sigma;\mathbf{k}]$ 
for  $\mathbf{k}=\Z$ or $\Q$. 
\end{lemma}
\begin{proof}
We construct an isomorphism between $\PP[\Sigmahat;\mathbf{k}]$ and $\mathcal{SR}[\Sigma;\mathbf{k}]$, 
where $\mathbf{k}=\Z \text{ or } \Q$.  Assume that $|\Sigma^{(1)}|=m$. 
Define a map
\begin{equation}\label{eq_alpha}
    	\phi \colon \mathbf{k}[x_1,  \ldots , x_{m}] \to \PP[\widehat{\Sigma};\mathbf{k}]
\end{equation}
    by restriction to each cone of $\widehat{\Sigma}$. 
    Then, this map $\phi$ is surjective ring homomorphism. 
    Indeed, given $\{g_{\sigmahat} \} \in \PP[\widehat{\Sigma};\mathbf{k}]$, 
    we can  apply the \emph{inclusion-exclusion principle} to obtain 
\begin{equation}\label{eq_inc_exc_principle}
    h(x_1, \ldots , x_{m})=
    \sum_{j=0}^{n-1}\left((-1)^{j} \sum_{\substack{ \hat\tau \in \Sigmahat \\ \dim \hat{ \tau}=n-j}}g_{\hat\tau}(x_{i_1}, \dots, x_{i_{n-j}}) \right)
\end{equation}
 which is the desired global function $h$ satisfying  
    $\phi(h)=\{g_{\hat\sigma}\}$, where $\hat\sigma \in \Sigmahat^{(n)}$. 

Moreover, since the zero element in $\PP[\Sigmahat;\mathbf{k}]$ is 
	$\{  g_{\sigmahat} =0 \mid \hat\sigma\in \widehat\Sigma^{(n)} \}$, the kernel is 
	$$\ker \phi = {\rm span} \left\{\prod_{j=1}^{k}x_{i_j}  \mid 
	{\rm cone}\{\mathbf{e}_{i_1}, \dots, \mathbf{e}_{i_k} \}   \notin \widehat{\Sigma} \right\}, $$
	which is exactly the Stanley--Reisner ideal $\mathcal{I}$ of $\Sigma$. 
	Hence, the result follows. 
\end{proof}

\begin{corollary}\label{cor_PP=wSR}
There exists an isomorphism $\PP[\Sigma]\cong w\SR[\Sigma]$ 
(see \eqref{def_wSR}) as $H^\ast(BT)$-algebra. 
\end{corollary}
\begin{proof}
Cosider the composition of ring homomorphisms 
$$\xymatrix{\PP[\Sigma]~~\ar@{>->}[r]^-{\Lambda^\ast}& \PP[\Sigmahat]\ar[r]^-{\Phi^{-1}}\ar[r] & \SR[\Sigma]},$$
where $\Phi \colon  \SR[\Sigma] \to \PP[\Sigmahat] $ is the isomorphism induced by $\phi$. 
With $\Z$-coefficients, the map $\Lambda^\ast$ is injective by Lemma \ref{lem_relation_PPhat_PP}. 
Hence, $\PP[\Sigma]$ is isomorphic to its image in $\SR[\Sigma]$ via the composition $\Phi^{-1} \circ\Lambda^\ast$. 

Recall that the composition $\Phi^{-1}\circ\Lambda^\ast$ is isomorphism over $\Q$, whose 
inverse $\Theta\circ \Phi^{-1}$ maps an element $[h]\in \SR[\Sigma;\Q]$ to 
$\{h(z^\sigma)\}_{\sigma\in \Sigma^{(n)}}   \in \PP[\Sigma;\Q]$. Therefore, 
over integer coefficients, $[h] \in \text{im}(\Phi^{-1} \circ\Lambda^\ast)$ if and only if 
the polynomial $h$ satisfies the integrality condition. Hence, the result follows. 
\end{proof}

Finally, we conclude this subsection with a proof of Theorem \ref{thm_cohomology_ring}. 
	\begin{proof}[Proof of theorem \ref{thm_cohomology_ring}]
	Since $H^\ast(X_\Sigma;\Z)$ concentrated in even degrees, 
	the Serre spectral sequence for the fibration
	$$\xymatrix{X_\Sigma \ar[r] & ET\times_T X_\Sigma \ar[r]^-{\pi} & BT}$$
	degenerates at $E_2$ level. 
	By the result from Franz and Puppe (\cite{FrPu}, Theorem 1.1), 
	we get the following isomorphisms of $H^\ast(BT)$-algebras,
        \begin{align*}
         H^\ast(X_\Sigma) &\cong H^\ast_T(X_\Sigma)\otimes_{H^\ast(BT)}\Z \\
         &\cong H^\ast_T(X_\Sigma) / {\im }(\pi^\ast \colon H^\ast(BT) \to H^\ast_T(X_\Sigma)).
        \end{align*}
        By Proposition \ref{prop.H_T=PP_by_BRF} and Corollary \ref{cor_PP=wSR},  
        we have 
        $H^\ast_T(X_\Sigma) \cong w\mathcal{SR}[\Sigma]$. Moreover, 
        for each $u_j \in \Z[u_1, \dots, u_n] \cong H^\ast(BT)$, 
        $$(\Phi \circ \Lambda^\ast)(u_j)= \sum_{i=1}^m \langle \lambda_i, e_j\rangle x_i.$$
Hence, we conclude that $\text{im}(\pi^\ast \colon H^\ast(BT) \to H^\ast_T(X_\Sigma))=\mathcal{J}$. 
	\end{proof}

\section{Example: Orbifold Hirzebruch varieties}\label{sec_orb.Hirz.Surf}
We finish this paper by illustrating the results of the previous sections by a 
concrete example which is not a weighted projective space.
Consider a primitive vector $(a,b)\in \Z^2$ with $a>0$. Together with $(-1,0),$ $(0,1),$ 
$(0,-1)$, we can make a complete fan $\Sigma$ in $\R^2$ which gives us a 
compact toric variety with two singular points. We denote this toric variety by
$\mathscr{H}_{(a,b)}$. 
See Figure \ref{Fig_Hirzb_and_Orb_Hirzb} for the fan and corresponding 
$\mathcal{R}$-characteristic pair $(Q, \lambda)$. 
When $a=1$, the toric variety is known as a Hirzebruch surface, say $\mathscr{H}_b$. 
In this point of view, let us call $\mathscr{H}_{(a,b)}$ an 
\emph{orbifold Hirzebruch variety}. 
	
Since the collection in \eqref{eq_gcd_collection} becomes 
$\{|G_Q(v)| \mid v\in V(Q)\}=\{1,1,a,a\}$ when $B_1=Q$, its gcd is 1. 
Moreover, in any retraction 
sequence, $B_2$ is  given by a union of edges, which 
guarantees that $(Q, \lambda)$ satisfies the 
assumption of Theorem \ref{thm_gcd_theorem}, 
see Example \ref{ex_orbifold_over_polygon}. 
Moreover, since the underlying polytope is a square, 
the integral betti numbers are given by $\beta^0=\beta^4=1$ and $\beta^2=2$
by Remark \ref{rmk_betti_num=h-vector}. 
\begin{remark}
We may compute the (co)homology groups of low dimensional toric orbifolds 
by the spectral sequence whose $E_1$ page is described by the fan data; 
see \cite{Jor} and \cite{Fis}. More generally, the low dimensional  calculations of 
Kuwata, Masuda and Zeng \cite{KMZ} apply to the category of torus orbifolds. 
\end{remark}

Let $\sigma_{ij}=cone\{\lambda_i, \lambda_j\}$, where $\lambda_1, \dots, \lambda_4$ are 
described in the right hand side of Figure \ref{Fig_Hirzb_and_Orb_Hirzb}. Then, 
the integrality condition of Definition \ref{def_int_cond} is given by the following vectors:
$$
\begin{array}{rrrrrrl}
\vspace{0.1cm}
z^{\sigma_{12}}=&\big(& \frac{1}{a}u_1, &-\frac{b}{a}u_1+u_2,& 0,& 0&\big)\\
\vspace{0.1cm}
z^{\sigma_{14}}=&\big(& \frac{1}{a}u_1,&0,& 0,&  \frac{b}{a}u_1-u_2&\big)\\
\vspace{0.1cm}
z^{\sigma_{23}}=&\big(& 0,& u_2,& -u_1,& 0 &\big)\\
z^{\sigma_{34}}=&\big(&0,&0,&-u_1, &-u_2&\big).
\end{array}
$$
%
Notice that the last two vectors $z^{\sigma_{23}}$ and $z^{\sigma_{34}}$ don't 
contribute the integrality condition,  because their entries have integral coefficients. 

A similar computation to Example \ref{ex_wSR_for_CP^2(1,a,b)} 
shows that the following polynomials are 
elements of degree $2$ in $w\SR[\Sigma]$:
\begin{equation}\label{eq_deg_2_orb_hirz}
ax_1-x_3,~  bx_1+x_2-x_4, ~ ax_1, ~ax_2, ~x_3, ~ax_4
\end{equation}
as are  $\Z$-linear combinations of them. 
The first two elements are actually the linear relations in $\mathcal{J}$, which means 
that they come from the global polynomials in $\PP[\Sigma]$. 
Since rank$H^2(\mathscr{H}_{(a,b)})=2$, we choose two linearly 
independent elements as follows, 
$$w_1:=ax_1\text{ and } w_2:=ax_4.$$ 
Next, degree $4$ elements in $w\mathcal{SR}[\Sigma]$ are
\begin{equation}\label{eq_deg_4_orb_hirz}
a^2x_1^2, ~a^2x_2^2, ~x_3^2, ~a^2x_4^2,~ a^2x_1x_2, 
~a^2x_1x_4, ~x_2x_3\text{ and } ~x_3x_4,
\end{equation}
and their $\Z$-linear combination. 
First four of \eqref{eq_deg_4_orb_hirz} are just the square of degree $2$ elements. 
The remaining four monomials are:
\begin{itemize}
\item $a^2x_1x_2=ax_1ax_2=ax_1a(-bx_1+x_4)=w_1(-bw_1+w_2)$,
\item $a^2x_1x_4=ax_1ax_4=w_1^2$,
\end{itemize}
Notice that the final two monomials $x_2x_3,~x_3x_4$ cannot come from degree $2$ elements. 
Hence, we put 
$$w_3:=x_3x_4.$$ Then, 
$$x_2x_3=(-bx_1+x_4)x_3=x_3x_4=ax_1x_4=aw_3.$$ 
The second equality holds because of the Stanley--Reisner ideal $\mathcal{I}=\langle x_1x_3, x_2x_4\rangle$. 
Finally, the ideal $\mathcal{I}$ and $\mathcal{J}$ determine the multiplicative structures as follows:
\begin{align*}
w_1^2&=(ax_1)^2=(ax_1)(x_3)=0,\\
w_1w_2&=(ax_1)(ax_4)=x_3(ax_4)=aw_3,\\
w_2^2&=(ax_4)(ax_4)=a(bx_1+x_2)(ax_4)=abx_3x_4=abw_3,\\
w_1w_3&=(ax_1)(x_3x_4)=0,\\
w_2w_3&=(ax_4)(x_3x_4)=ax_4x_3(bx_1+x_2)=0,\\
w_3^2&=(x_3x_4)^2=x_3^2x_4^2=(ax_1x_3)(x_4^2)=0.
\end{align*}
Therefore, for the cohomology of  the Hirzebruch variety, we we get the following
 presentation.
\begin{equation}\label{eq_cohom_orb_hirz}
H^\ast(\mathscr{H}_{(a,b)})\cong \Z[w_1, w_2, w_3]/ \langle w_1^2, w_1w_2-aw_3, w_2^2-abw_3, w_1w_3, w_2w_3, w_3^2\rangle, 
\end{equation}
where $\deg w_1=\deg w_2=2$ and $\deg w_3=4$.

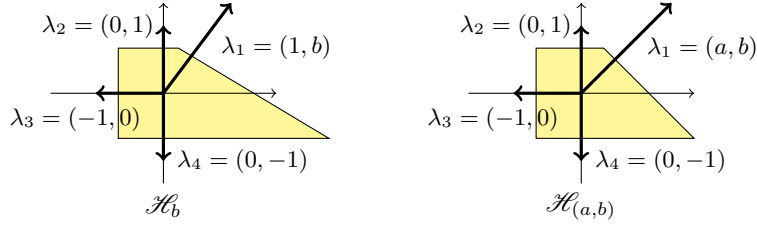
\begin{figure}
	\begin{tikzpicture}[scale=0.3]
		\draw[fill=yellow!50] (3,8)--(3,4)--(37/3, 4)--(17/3,8)--cycle;
		\draw[->] (0,6)--(10,6);
		\draw[->] (5,2)--(5,10);
		\draw[very thick,->] (5,6)--(2,6);
		\node[below] at (1.1,5.8) {\small$\lambda_3=(-1,0)$};
		\draw[very thick,->] (5,6)--(5,9);
		\node[left] at (5,9) {\small$\lambda_2=(0,1)$};
		\draw[very thick,->] (5,6)--(5,3);
		\node[right] at (5.2,3) {\small$\lambda_4=(0,-1)$};
		\draw[very thick,->] (5,6)--(8,10);
		\node[below] at (10,9) {\small$\lambda_1=(1,b)$};
		\node at (5,1) {$\mathscr{H}_b$};
		
		\begin{scope}[xshift=100]
		\draw[fill=yellow!50] (18,8)--(18,4)--(25,4)--(21,8)--cycle;
		\draw[->] (15,6)--(25,6);
		\draw[->] (20,2)--(20,10);
		\draw[very thick,->] (20,6)--(17,6);
		\node[below] at (16.1,5.8) {\small$\lambda_3=(-1,0)$};
		\draw[very thick,->] (20,6)--(20,9);
		\node[left] at (20,9) {\small$\lambda_2=(0,1)$};
		\draw[very thick,->] (20,6)--(20,3);
		\node[right] at (20.2,3) {\small$\lambda_4=(0,-1)$};
		\draw[very thick,->] (20,6)--(24,10);
		\node[right] at (22.5,8) {\small$\lambda_1=(a,b)$};
		\node at (20,1) {$\mathscr{H}_{(a,b)}$};
		\end{scope}
	 \end{tikzpicture}
\caption{A Hirzebruch surface and an orbifold Hirzebruch variety.}
\label{Fig_Hirzb_and_Orb_Hirzb}
\end{figure}

\begin{remark}
The cohomology ring of Hirzebruch surface, by way of comparison, can be computed 
from the result of \cite{Dan}, \cite{Jur} or \cite{DJ}. Indeed it has the following 
presentation 
$$H^\ast(\mathscr{H}_b) \cong \Z[w_1, w_2]/(w_1^2, w^2- b w_1w_2),$$ 
where $\deg w_1=\deg w_2=2$, which means that it is generated by degree $2$ elements. 
However, $H^\ast(\mathscr{H}_{(a,b)})$ has the degree $4$ generator $w_3$ which
 cannot be 
generated by degree $2$ elements, 
i.e., $w_1w_2=a w_3$. Notice that we can recover the presentation of 
$H^\ast(\mathscr{H}_b) $ by replacing $a$ into $1$ in \eqref{eq_cohom_orb_hirz}. 
\end{remark}

%
%
\bibliographystyle{amsalpha}

\end{document}